\documentclass[a4paper, 11pt]{article}
\usepackage[utf8]{inputenc}
\usepackage[margin=1.2in]{geometry} 
\usepackage{amsmath,amsthm,amssymb,amsfonts,amscd}
\usepackage{mathtools}
\usepackage{enumitem}
\usepackage{indentfirst}
\usepackage{algorithm, algpseudocode, algorithmicx}
\usepackage{appendix}
\usepackage{hyperref}
\setlist[enumerate,1]{label={(\alph*)}}
\setlength\parindent{0.6cm}

\title{The Doubly Stochastic Single Eigenvalue Problem: A Computational Approach}
\author{Amit Harlev\footnote{Harvey Mudd University, Claremont, California, USA.}\; , Charles R. Johnson\footnote{College of William and Mary, Williamsburg, Virginia, USA.}\; , and Derek Lim\footnote{Cornell University, Ithaca, New York, USA. dl772@cornell.edu (Corresponding author)}}
\date{April 3, 2020}

\newcommand{\RR}{\mathbb{R}}
\newcommand{\CC}{\mathbb{C}}
\newcommand{\mc}[1]{\mathcal{#1}}
\newcommand{\mrm}[1]{\mathrm{#1}}

\theoremstyle{plain}
\newtheorem{Thm}{Theorem}[section]
\newtheorem{lemma}[Thm]{Lemma}
\newtheorem{Cor}[Thm]{Corollary}

\theoremstyle{definition}

\newtheorem{Conj}[Thm]{Conjecture}
\newtheorem{observation}{Observation}[section]

\begin{document}
\maketitle

\begin{abstract}
    The problem of determining $DS_n$, the complex numbers that occur as an eigenvalue of an $n$-by-$n$ doubly stochastic matrix, has been a target of study for some time. The Perfect-Mirsky region, $PM_n$, is contained in $DS_n$, and is known to be exactly $DS_n$ for $n \leq 4$, but strictly contained within $DS_n$ for $n = 5$. Here, we present a Boundary Conjecture that asserts that the boundary of $DS_n$ is achieved by eigenvalues of convex combinations of pairs of (or single) permutation matrices. We present a method to efficiently compute a portion of $DS_n$, and obtain computational results that support the Boundary Conjecture. We also give evidence that $DS_n$ is equal to $PM_n$ for certain $n > 5$.
    \footnote{We include codes for running experiments and recreating figures in this paper in the following repository: \url{https://github.com/cptq/ds-spectra} }
\end{abstract}

    \textbf{Keywords:} Doubly stochastic matrix, Eigenvalue, Group representation, Permutation matrix, Single eigenvalue problem
    
    \textbf{2010 AMS Subject Classification:} 15-04, 15A18, 15A29, 15B51
    
\section{Introduction}
A matrix $A \in M_n(\RR)$ is \textit{doubly stochastic} if $A \geq 0$ (entry-wise) and all row and column sums of $A$ are $1$. Denote the set of doubly stochastic matrices by $\Omega_n$. The \textit{doubly stochastic single eigenvalue problem} asks for the determination of $DS_n = \bigcup_{A \in \Omega_n} \sigma(A)$, i.e. which complex numbers occur as an eigenvalue of an $n$-by-$n$ doubly stochastic matrix? This problem is unsettled for $n>4$ and is one of the several members of the family associated with the very difficult non-negative inverse eigenvalue problem (NIEP) \cite{NIEP}. Because permutation matrices are doubly stochastic, $DS_n$ includes all roots of unity of order $k \leq n$. Denote by $\Pi_k$ the convex hull of the \textit{k}-th roots of unity. In \cite{PM} it was noted that $PM_n = \bigcup_{k \leq n} \Pi_k \subseteq DS_n$, and that for $n<4$, $PM_n = DS_n$. Recently, it was shown in \cite{DS4} that $DS_4 = PM_4$. However, in \cite{MR}, a very particular matrix in $\Omega_5$ was exhibited with a conjugate pair of eigenvalues just outside of $PM_5$. We comment on this further, later. Of course $DS_5 \subseteq DS_6$, and this conjugate pair lies in $PM_6$, so that there is no implication about the relationship between $PM_n$ and $DS_n$ for $n>5$.

In contrast to $DS_n$, the row stochastic single eigenvalue problem (equivalent to the non-negative single eigenvalue problem \cite{NIEP}), is understood (see \cite{Karp}, \cite{DD}, \cite{Ito}, \cite{Matricial}). We refer to the region of eigenvalues achievable by row stochastic matrices as the Karpelevich region and denote it by $K_n$. Of course, this larger subset of the unit disc also includes the \textit{k}-th roots of unity, $k \leq n$ (which are the only points on the unit circle); between consecutive roots of unity, an in-bending algebraic curve excludes a portion of the unit disc.
Moreover, these curves follow an eigenvalue of a matrix on the line segment joining two simple row stochastic matrices \cite{Matricial}. We conjecture that the boundary of $DS_n$ (a set star-shaped from any $s \in [0,1]$) is determined by the eigenvalue paths resulting from the convex combinations of pairs of permutation matrices. Any doubly stochastic matrix is a convex combination of permutation matrices \cite{Birkhoff}, and in fact, as is clear by convexity theory, any $n$-by-$n$ doubly stochastic matrix is a convex combination of at most $(n-1)^2+1$  permutation matrices \cite{DiagonalsDS}.

Here, we present the Boundary Conjecture, which states that every complex number on the boundary of $DS_n$ is achieved as an eigenvalue of some doubly stochastic matrix that is a convex combination of two or fewer permutation matrices. Assuming this conjecture true significantly reduces the problem size of computing $DS_n$ to a given precision, and allows us to design algorithms much more efficient than naive methods for doing so. There is much theoretical evidence to support our conjecture; our purpose here is to give further computational evidence for our conjecture and to show through computation that, if it is correct, then it is likely that $DS_n = PM_n$ for $n=6,7,8,9,10$ and $11$ . Moreover, we investigate the exceptional $n=5$ case in detail. We find that the known example outside $PM_5$ is an eigenvalue of a matrix on the line segment joining two permutations and that a conjugate pair of eigenpaths are, up to uniform permutation similarity of the pair of permutation matrices generating it, the only paths arising from two permutations that leave $PM_5$. 

\section{Preliminaries and Notation}

We begin by stating Birkhoff's Theorem---a fundamental theorem underlying much of the study of doubly stochastic matrices \cite{Birkhoff}. Denote the convex hull of a set $S$ by $Co(S)$.
\begin{Thm}[Birkhoff]
$A \in M_n$ is doubly stochastic if and only if it is a convex combination of $n$-by-$n$ permutation matrices:
\[\Omega_n = Co(\{P \in M_n : \text{$P$ a permutation matrix}\})\]
\end{Thm}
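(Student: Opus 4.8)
The plan is to prove the two inclusions separately. The containment $Co(\{P \in M_n : P \text{ a permutation matrix}\}) \subseteq \Omega_n$ is the routine direction: each permutation matrix is itself doubly stochastic (nonnegative, with exactly one $1$ in each row and column), and $\Omega_n$ is convex because it is cut out by the linear constraints $a_{ij} \geq 0$, $\sum_j a_{ij} = 1$, and $\sum_i a_{ij} = 1$. Hence any convex combination of permutation matrices lands in $\Omega_n$, and I would dispose of this direction in a line or two.

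The substance lies in the reverse inclusion: every $A \in \Omega_n$ is a convex combination of permutation matrices. My approach would be a constructive ``peeling'' argument by induction on the number of nonzero entries of $A$. The key lemma is that every doubly stochastic $A$ admits a permutation $\sigma$ with $a_{i,\sigma(i)} > 0$ for all $i$, a positive generalized diagonal. I would obtain this by applying Hall's Marriage Theorem to the bipartite graph joining row $i$ to column $j$ whenever $a_{ij} > 0$: for any set $S$ of rows, the total mass carried by rows in $S$ equals $|S|$ and is supported entirely within the columns of its neighborhood $N(S)$, whose total mass is $|N(S)|$, so $|S| \leq |N(S)|$; thus Hall's condition holds and a perfect matching, i.e. a permutation $\sigma$, exists.

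Given such a $\sigma$, set $\lambda = \min_i a_{i,\sigma(i)} > 0$. If $\lambda = 1$ then $A = P_\sigma$ and we are done; otherwise form $A' = (1-\lambda)^{-1}(A - \lambda P_\sigma)$ and verify it is again doubly stochastic, since the row and column sums of $A - \lambda P_\sigma$ are each $1 - \lambda$ and nonnegativity holds both on and off the support of $\sigma$, while $A'$ has strictly fewer nonzero entries because the minimizing entry becomes zero. As a doubly stochastic matrix has at least $n$ nonzero entries, the base case of exactly $n$ nonzeros forces $A$ to be a permutation matrix; and applying the inductive hypothesis to $A'$ yields $A' = \sum_k c_k P_k$, whence $A = \lambda P_\sigma + (1-\lambda)\sum_k c_k P_k$ is the desired convex combination.

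I expect the main obstacle to be establishing the existence of the positive diagonal, as the verification of Hall's condition is where the combinatorial content resides, while everything afterward is essentially bookkeeping. An alternative route would invoke Krein--Milman: $\Omega_n$ is a compact convex polytope, hence the convex hull of its extreme points, and one would then characterize the extreme points as exactly the permutation matrices---the harder half again requiring a matching or cycle-cancellation argument to exhibit any non-permutation doubly stochastic matrix as a nontrivial midpoint of two others. I would favor the Hall-based construction, since it produces the representation explicitly rather than merely asserting its existence.
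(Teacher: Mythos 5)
Your proof is correct, but there is nothing in the paper to compare it against: the paper only \emph{states} Birkhoff's Theorem, citing the original literature, and never proves it---the theorem is used as a black box underpinning the reduction of $DS_n$ to convex combinations of permutation matrices. Your argument is the classical one: the easy inclusion follows from convexity of $\Omega_n$ (it is cut out by linear equalities and inequalities), and the hard inclusion proceeds by extracting a positive generalized diagonal via Hall's Marriage Theorem, subtracting $\lambda P_\sigma$ with $\lambda = \min_i a_{i,\sigma(i)}$, renormalizing, and inducting on the number of nonzero entries, which strictly decreases. All steps check out: Hall's condition is verified correctly by comparing the mass $|S|$ carried by a set of rows with the mass $|N(S)|$ available in its column neighborhood; the matrix $A' = (1-\lambda)^{-1}(A - \lambda P_\sigma)$ is indeed doubly stochastic; and the base case (exactly $n$ nonzero entries forces a permutation matrix) is sound. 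The induction terminates since the nonzero count is an integer between $n$ and $n^2$. This constructive peeling argument also yields the quantitative refinement the paper cites separately, namely that at most $(n-1)^2 + 1$ permutation matrices suffice, since each step kills at least one of the at most $n^2 - n$ off-diagonal degrees of freedom; your proof is thus well aligned with, and arguably more informative than, the bare statement the paper relies on.
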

Now, we prove some fundamental properties of $DS_n$. Much of this was shown in \\ \cite{PM}, though we shall for the most part present alternative proofs in hopes of inspiring new perspectives in which to study this difficult problem. In particular, we use some basic concepts from representation theory and do not use the Perron-Frobenius theory of nonnegative matrices.

First, observe that $DS_n$ is contained in the unit disc, due to the properties that the spectral radius is bounded by the spectral norm, convexity of the spectral norm and the fact that all permutation matrices have spectral norm $1$ \cite{MatrixAnalysis}. Moreover, observe that the permutation matrices share a common eigenvalue $1$ with eigenvector $e$, the all-ones vector. Thus, the technique of deflation can be used to reduce the problem size. We choose a deflating matrix $S$ with inverse of the following form:

\begin{equation}\label{deflation}
S = \begin{bmatrix}
1 & 0\\
e & I_{n-1}
\end{bmatrix}, \qquad S^{-1} = \begin{bmatrix}
1 & 0\\
-e & I_{n-1}
\end{bmatrix}
\end{equation}

\noindent so that multiplication by $S$ and its inverse can be done in linear time. Then we note that for any permutation matrix $P$, applying similarity by $S$ gives
\[S^{-1} P S = \begin{bmatrix}
1 & *\\
0 & P'
\end{bmatrix}\]

\noindent where $P' \in M_{n-1}$ is a (not generally nonnegative) matrix, that has the same multiset of eigenvalues as $P$, with the eigenvalue $1$ having a singly decremented multiplicity. 

For any convex combination of permutations, $\sum_i \alpha_i P_i$ has the same eigenvalues as\\ $\sum_i \alpha_i S^{-1} P_i S$, which has as lower right block the matrix $\sum_i \alpha_i P_i'$. The set of all $P'$ is called the standard representation of $S_n$, and is an irreducible representation of dimension $n-1$. Note that any eigenvalue $\lambda$ of a doubly stochastic matrix $A$ that is not equal to $1$ is thus an eigenvalue of the corresponding lower right block matrix of $S^{-1} A S$. Moreover, the eigenvalue $1$ is obviously achieved in the convex hull of the standard representation as an eigenvalue of say $I_{n-1}$. Thus, we have the following lemma:

\begin{lemma}\label{StandardRep}
$DS_n$ equals the region achieved by eigenvalues of the matrices in the convex hull of the standard representation of $S_n$.
\end{lemma}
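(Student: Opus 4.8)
The plan is to establish a two-way containment between $DS_n$ and the set $R := \{\lambda : \lambda \in \sigma(M) \text{ for some } M \in Co(\{P' : P \text{ a permutation matrix}\})\}$, the region achieved by eigenvalues of matrices in the convex hull of the standard representation. Since the statement is an equality of sets, I would argue each inclusion separately, leaning on the deflation setup already developed in the excerpt.

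For the inclusion $DS_n \subseteq R$, I would take an arbitrary $\lambda \in DS_n$, so $\lambda \in \sigma(A)$ for some doubly stochastic $A = \sum_i \alpha_i P_i$ with $\alpha_i \geq 0$ and $\sum_i \alpha_i = 1$. I would split into two cases. If $\lambda \neq 1$, then by the deflation argument already given, $\lambda$ is an eigenvalue of the lower-right block of $S^{-1} A S$, which equals $\sum_i \alpha_i P_i'$; this matrix is a convex combination of standard-representation matrices and hence lies in the convex hull, so $\lambda \in R$. If $\lambda = 1$, the excerpt already observes that $1$ is achieved as an eigenvalue of $I_{n-1}$, which is itself $P'$ for $P = I_n$ and hence in the convex hull; so $1 \in R$. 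Together these give $DS_n \subseteq R$.

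For the reverse inclusion $R \subseteq DS_n$, I would take $\lambda \in \sigma(M)$ where $M = \sum_i \alpha_i P_i'$ is a convex combination of standard-representation matrices. The key observation is that the map $P \mapsto P'$ is (the restriction of) a similarity transform acting blockwise: writing $B = \sum_i \alpha_i P_i$, which is doubly stochastic by Birkhoff, the similarity $S^{-1} B S$ is block upper triangular with diagonal blocks $1$ and $\sum_i \alpha_i P_i' = M$. Hence $\sigma(B) = \{1\} \cup \sigma(M)$, so every eigenvalue of $M$ is an eigenvalue of the doubly stochastic matrix $B$, giving $\lambda \in DS_n$. Combining both inclusions yields the claimed equality.

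The step I expect to require the most care is verifying that the convex-combination structure is preserved exactly under the passage $A = \sum_i \alpha_i P_i \mapsto \sum_i \alpha_i P_i'$, and in particular that the block-triangular form of $S^{-1} P_i S$ holds simultaneously for every permutation with the same fixed deflating matrix $S$ (so that the lower-right blocks genuinely add up with the same weights). This is what makes both directions reversible and lets the eigenvalue correspondence $\sigma(B) = \{1\} \cup \sigma(M)$ go through cleanly; the linearity of $A \mapsto S^{-1} A S$ in $A$ is the technical fact doing the work, and the only subtlety is bookkeeping the common eigenvalue $1$ and the decremented multiplicity so that no eigenvalue of $DS_n$ is lost or spuriously gained.
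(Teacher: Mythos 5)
Your proof is correct and follows essentially the same route as the paper: the paper justifies this lemma by the deflation discussion immediately preceding it, observing that $S^{-1}(\sum_i \alpha_i P_i)S$ is block upper triangular with lower-right block $\sum_i \alpha_i P_i'$, so eigenvalues other than $1$ transfer between the two sets, with $\lambda = 1$ handled via $I_{n-1}$. You merely make explicit the two inclusions (and the harmless bookkeeping of the eigenvalue $1$) that the paper leaves implicit.
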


A region $D \subseteq \CC$ of the complex plane is \textit{star-shaped} from a point $s \in \CC$ if for all points $x \in D$, the line segment $\{\alpha x + (1-\alpha) s : \alpha \in [0,1]\}$ is contained in $D$. It was shown in \cite{PM} that $DS_n$ is star-shaped from $0$---we will prove a slightly stronger statement.
\begin{lemma}\label{StarShaped}
If the scalar matrix $sI$ is in the convex hull $Co(\mc{F})$ of a family of matrices $\mc{F} \subseteq M_n$, then  the region of eigenvalues $\{\lambda : \lambda \in \sigma(A), A \in Co(\mc{F})\}$ is star-shaped from $s$.
\end{lemma}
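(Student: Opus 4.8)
The plan is to exploit the single structural feature that distinguishes a scalar matrix: $sI$ commutes with every matrix and preserves every eigenvector, so adjoining it in a convex combination acts as a uniform affine shift on the spectrum. Concretely, fix any point $\lambda$ in the eigenvalue region; by definition there is a matrix $A \in Co(\mc{F})$ and a nonzero vector $v$ with $Av = \lambda v$. To prove star-shapedness from $s$, I would show that every point of the segment $\{\alpha\lambda + (1-\alpha)s : \alpha \in [0,1]\}$ is again an eigenvalue of some matrix in $Co(\mc{F})$, by exhibiting such a matrix explicitly for each $\alpha$.

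The natural candidate is the convex combination $B_\alpha = \alpha A + (1-\alpha)\,sI$. First I would verify membership: since $A \in Co(\mc{F})$ and $sI \in Co(\mc{F})$ by hypothesis, and $Co(\mc{F})$ is a convex set, $B_\alpha \in Co(\mc{F})$ for every $\alpha \in [0,1]$. Then, applying $B_\alpha$ to the eigenvector $v$ and using $Iv = v$ gives
\[
B_\alpha v = \alpha A v + (1-\alpha) s\, v = \big(\alpha \lambda + (1-\alpha)s\big)\,v ,
\]
so $\alpha\lambda + (1-\alpha)s$ is an eigenvalue of $B_\alpha$ and hence lies in the eigenvalue region. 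As $\alpha$ ranges over $[0,1]$ this sweeps out precisely the segment joining $\lambda$ and $s$, which is exactly the star-shapedness condition.

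There is essentially no hard step here: the entire argument rests on the observation that $sI$ shares the eigenvector $v$ of $A$, so that forming $B_\alpha$ translates the relevant eigenvalue of $\alpha A$ by the constant $(1-\alpha)s$ without perturbing the eigenspace. The only point meriting explicit mention is that $v$ genuinely remains an eigenvector of $B_\alpha$ — but this is immediate since a scalar matrix fixes the direction of every vector. To recover the stated strengthening for $DS_n$, I would then apply the lemma with $\mc{F}$ taken to be the standard representation of $S_n$: there $I_{n-1}$ lies in the convex hull (as the image of the identity permutation) and the zero matrix lies in it as well (the average of a nontrivial irreducible representation over the group vanishes), so $sI_{n-1} = s\,I_{n-1} + (1-s)\cdot 0 \in Co(\mc{F})$ for every $s \in [0,1]$, yielding star-shapedness of $DS_n$ from any such $s$ via Lemma~\ref{StandardRep}.
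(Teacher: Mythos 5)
Your proposal is correct and follows essentially the same route as the paper: both form the convex combination $\alpha A + (1-\alpha)sI$, note it lies in $Co(\mc{F})$ by convexity together with the hypothesis $sI \in Co(\mc{F})$, and conclude that $\alpha\lambda + (1-\alpha)s$ is one of its eigenvalues. The only difference is that you verify this last step explicitly via the shared eigenvector $v$, a detail the paper leaves implicit, and you append the application to $DS_n$ that the paper states separately as Corollary \ref{StarDSn}.
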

\begin{proof}
If $\lambda$ is an eigenvalue of $A \in Co(\mc{F})$, then since $\alpha A + (1-\alpha)s I$ is in $Co(\mc{F})$ for $\alpha \in [0,1]$, we have that the eigenvalue $\alpha \lambda + (1-\alpha)s$ of this matrix is in the region of eigenvalues.
\end{proof}

\begin{Cor}\label{StarDSn}
$DS_n$ is star-shaped from any point in  $[0,1]$ for $n \geq 2$.
\end{Cor}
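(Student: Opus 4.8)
The plan is to apply Lemma \ref{StarShaped} with $\mathcal{F}$ taken to be the standard representation of $S_n$, i.e. the family $\{P' : P \text{ a permutation matrix}\} \subseteq M_{n-1}$ of deflated blocks introduced above. By Lemma \ref{StandardRep}, the region of eigenvalues of matrices in $Co(\mathcal{F})$ is exactly $DS_n$, so it suffices to show that for each fixed $s \in [0,1]$ the scalar matrix $sI_{n-1}$ lies in $Co(\mathcal{F})$; Lemma \ref{StarShaped} will then deliver star-shapedness of $DS_n$ from that $s$. Note that working in the standard representation is essential here: for $s<1$ the scalar matrix $sI_n$ is not doubly stochastic, so one cannot invoke Lemma \ref{StarShaped} directly on $\Omega_n$, whereas after deflation the needed scalar matrices do become available in the convex hull.

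To produce $sI_{n-1}$ inside $Co(\mathcal{F})$, I would exhibit two distinguished members of the convex hull and interpolate between them. First, the identity permutation deflates to $I_{n-1}$, so $I_{n-1} \in \mathcal{F} \subseteq Co(\mathcal{F})$. Second, I claim $0_{n-1} \in Co(\mathcal{F})$: the uniform average $\frac{1}{n!}\sum_P P$ of all $n!$ permutation matrices equals $\frac{1}{n}J$, where $J$ is the all-ones matrix, and this is a genuine convex combination of permutations. Since $Co(\mathcal{F})$ is the image of $\Omega_n$ under the (linear) deflation map, it contains the deflated block of $\frac{1}{n}J$. Because $sI_{n-1} = s\,I_{n-1} + (1-s)\,0_{n-1}$ and $Co(\mathcal{F})$ is convex, we then obtain $sI_{n-1} \in Co(\mathcal{F})$ for every $s \in [0,1]$, completing the argument.

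The one point requiring care, and the main obstacle, is verifying that the deflated block of $\frac{1}{n}J$ is genuinely the zero matrix $0_{n-1}$ rather than merely a matrix with all eigenvalues $0$. I would check this directly from the explicit form of $S$ in \eqref{deflation}: the first column of $S$ is the all-ones Perron vector $e$, so the similarity $S^{-1}(\tfrac{1}{n}J)S$ sends $e_1$ to $e_1$ and collapses the remaining coordinates, since $J$ vanishes on the complement of $e$. A short computation with the block forms of $S$ and $S^{-1}$ confirms that the lower-right $(n-1)$-by-$(n-1)$ block is exactly $0_{n-1}$. Equivalently, and more conceptually, this average is the projection onto the trivial isotypic component of the standard representation, which is zero because that representation is irreducible and nontrivial for $n \geq 2$. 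With this in hand the corollary follows for all $n \geq 2$, precisely the range in which the standard representation has positive dimension $n-1$.
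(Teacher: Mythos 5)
Your proposal is correct and follows essentially the same route as the paper: reduce to the standard representation via Lemma \ref{StandardRep}, observe that $I_{n-1}$ and the zero matrix (the average of all deflated permutation matrices) lie in its convex hull, and interpolate to get $sI_{n-1}$ before applying Lemma \ref{StarShaped}. The only difference is cosmetic --- where the paper cites the fact that the sum of all elements of a nontrivial irreducible representation vanishes, you additionally verify by direct block computation that the deflated block of $\tfrac{1}{n}J$ is exactly $0_{n-1}$, which is a fine (and slightly more self-contained) justification of the same step.
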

\begin{proof}
The standard representation of $S_n$ contains the identity $I_{n-1}$. Moreover, the average of the elements of the standard representation, $\frac{1}{n!}\sum_i P_i'$ is in the convex hull of the standard representation and is in fact equal to $0$, since the sum of all elements of any nontrivial irreducible representation is equal to $0$. Thus, all scalar matrices $s I$ for $s \in [0,1]$ are in the convex hull and Lemma \ref{StarShaped} allows us to draw the stated conclusion.
\end{proof}

The following lemmas are useful for our computations and provide simple quantitative measures of how close $DS_n$ is to filling the entire unit disc---in the limit $DS_n$ fills the interior of the unit disc and a dense subset (consisting of roots of unity) of the unit circle.

\begin{lemma}\label{Polygons}
$PM_n = \cup_{k \leq n} \Pi_k \subseteq DS_n$
\end{lemma}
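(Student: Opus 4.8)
The equality $PM_n = \cup_{k \leq n}\Pi_k$ holds by the definition of $PM_n$ given in the introduction, so the real substance of the lemma is the containment $\cup_{k \leq n}\Pi_k \subseteq DS_n$; equivalently, the plan is to show $\Pi_k \subseteq DS_n$ for each fixed $k \leq n$ separately. My approach is to exhibit, for every point $z \in \Pi_k$, an explicit doubly stochastic matrix having $z$ as an eigenvalue, built as a circulant from a single $k$-cycle. The key observation is that a single cycle already supplies the full polygon, not just its vertices.

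First I would set $\omega = e^{2\pi i /k}$ and let $C_k \in M_k$ be the cyclic shift permutation matrix, whose powers $C_k^j$ are all simultaneously diagonalized in the Fourier basis, with $C_k$ having the $k$-th roots of unity $1, \omega, \dots, \omega^{k-1}$ as eigenvalues. A point $z \in \Pi_k$ is, by the definition of the convex hull, a combination $z = \sum_{j=0}^{k-1} c_j \omega^j$ with $c_j \geq 0$ and $\sum_j c_j = 1$. I would then form the circulant $B = \sum_{j=0}^{k-1} c_j C_k^j \in M_k$; by Birkhoff's Theorem it is doubly stochastic, being a convex combination of permutation matrices, and because the $C_k^j$ share the Fourier eigenbasis, its eigenvalues are $\lambda_\ell = \sum_{j=0}^{k-1} c_j \omega^{j\ell}$ for $\ell = 0, \dots, k-1$. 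Taking $\ell = 1$ yields $\lambda_1 = \sum_j c_j \omega^j = z$, so $z \in \sigma(B)$.

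To place this eigenvalue inside $\Omega_n$ rather than $\Omega_k$, I would pass to the block matrix $B \oplus I_{n-k}$, which is still doubly stochastic and retains $z$ as an eigenvalue; hence $z \in DS_n$. Since $z \in \Pi_k$ and $k \leq n$ were arbitrary, the containment $\cup_{k \leq n}\Pi_k \subseteq DS_n$ follows, and combined with the definitional equality this completes the lemma.

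This argument is essentially self-contained, and the only step needing real care is the circulant eigenvalue computation: one must verify that as $(c_j)$ ranges over the probability simplex the $\ell = 1$ eigenvalue $\sum_j c_j \omega^j$ sweeps out the entire \emph{filled} polygon $\Pi_k$, and that the indexing conventions genuinely place this quantity at $\ell = 1$ rather than at a conjugate root. An alternative route, closer to the representation-theoretic viewpoint of the paper, would be to obtain only the edges of $\Pi_k$ from convex combinations of two powers of $C_k$ and then try to fill the interior using star-shapedness (Corollary \ref{StarDSn}); but since $DS_n$ need not be convex, star-shapedness from a single point does not immediately deliver the interior, so the direct circulant construction is preferable because it produces the whole polygon in one step.
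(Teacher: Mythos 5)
Your proof is correct and takes essentially the same approach as the paper: both realize a point of $\Pi_k$ as an eigenvalue of a convex combination of the powers of a single $k$-cycle (your $B \oplus I_{n-k}$ \emph{is} the permutation matrix of a $k$-cycle in $M_n$, whose powers your circulant combination runs over). The only cosmetic difference is that you verify the eigenvalue by Fourier diagonalization of the circulant, while the paper invokes the fact that a polynomial in $A$ has eigenvalues given by that polynomial evaluated at the eigenvalues of $A$.
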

\begin{proof}
A permutation matrix $A$ that is a k-cycle has minimal polynomial $x^k-1$. Any convex combination $\alpha_0 I + \alpha_1 A + \ldots + \alpha_{k-1}A^{k-1}$ is a polynomial in $A$ with eigenvalues $\alpha_0 + \alpha_1 \lambda + \ldots + \alpha_{k-1} \lambda^{k-1}$, where $\lambda$ is an eigenvalue of $A$. Since $\xi$, a primitive kth root of unity, is an eigenvalue of $A$, $\Pi_k = Co(1, \xi, \ldots, \xi^{k-1})$ is contained in $DS_n$.
\end{proof}

\begin{lemma}\label{inradiuslemma}
The circle of radius $\cos(\frac{\pi}{n})$, centered at the origin, is contained in $DS_n$ for $n > 2$.
\end{lemma}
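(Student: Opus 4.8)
The plan is to recognize that the bound $\cos(\pi/n)$ is precisely the apothem (inradius) of the regular $n$-gon $\Pi_n$, and then to invoke Lemma \ref{Polygons} to situate the corresponding disc inside $DS_n$. Recall that $\Pi_n = Co(1, \xi, \ldots, \xi^{n-1})$, with $\xi = e^{2\pi i /n}$, is a regular $n$-gon inscribed in the unit circle whose vertices are the $n$-th roots of unity. Since $\Pi_n \subseteq DS_n$ by Lemma \ref{Polygons}, it suffices to show that the closed disc of radius $\cos(\pi/n)$ centered at the origin is contained in $\Pi_n$; the bounding circle then lies in $\Pi_n \subseteq DS_n$ as well.

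First I would compute the distance from the origin to each edge of $\Pi_n$. By the rotational symmetry of the roots of unity it is enough to treat the edge joining the adjacent vertices $1$ and $\xi$. The central angle subtended by this edge is $2\pi/n$, so by symmetry of the isosceles triangle formed with the origin, the foot of the perpendicular from the origin lands at the midpoint of the edge, at distance $\cos(\pi/n)$ from the center. Hence every edge of $\Pi_n$ lies at distance exactly $\cos(\pi/n)$ from the origin, i.e.\ the apothem of $\Pi_n$ equals $\cos(\pi/n)$.

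It then remains to conclude that the disc of radius $\cos(\pi/n)$ is contained in the convex polygon $\Pi_n$. Writing $\Pi_n$ as the intersection of the $n$ closed half-planes bounded by the lines through its edges, each such line is at distance $\cos(\pi/n)$ from the origin on the side containing the origin, so every point within distance $\cos(\pi/n)$ of the origin satisfies all $n$ half-plane inequalities and therefore lies in $\Pi_n$. The incircle, and hence its bounding circle, of radius $\cos(\pi/n)$ is thus contained in $\Pi_n \subseteq DS_n$. (Alternatively, one could first establish containment of the bounding circle and then appeal to Corollary \ref{StarDSn}, star-shapedness from $0$, to fill in the interior of the disc.)

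I do not anticipate a genuine obstacle here, as the argument reduces to elementary plane geometry once the identification with the apothem is made. The only points requiring care are verifying that the perpendicular foot is the edge midpoint (which follows from the symmetry of the isosceles triangle) and noting the restriction $n > 2$: for $n = 2$ the polygon $\Pi_2$ degenerates to the segment $[-1,1]$ and the bound $\cos(\pi/2) = 0$ becomes vacuous.
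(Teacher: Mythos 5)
Your proposal is correct and takes essentially the same approach as the paper: both identify $\cos(\pi/n)$ as the inradius (apothem) of the regular polygon $\Pi_n$ via the right triangle formed by the origin, a vertex, and an edge midpoint, and then rely on $\Pi_n \subseteq DS_n$ from Lemma \ref{Polygons}. Your write-up simply makes explicit the half-plane argument for disc containment and the role of Lemma \ref{Polygons}, which the paper leaves implicit.
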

\begin{proof}
	Consider the right triangle with vertices at $(0,0)$, $(1,0)$, and $M$, the midpoint of a side of $\Pi_n$ that has $(1,0)$ as an endpoint. The length of the line segment between $(0,0)$ and $M$ is the radius of the circle inscribed within $\Pi_n$. Since $\Pi_n$ is a regular polygon, the central angle is $2\pi/n$, so the right triangle gives that the radius of this inscribed circle is $\cos(\pi/n)$.
\end{proof}

Now, we formally state the Boundary Conjecture.

\begin{Conj}[Boundary Conjecture]
Pairs of permutation matrices determine the boundary of $DS_n$. That is, every point of the boundary of $DS_n$ is an eigenvalue of some convex combination of at most two permutation matrices.
\end{Conj}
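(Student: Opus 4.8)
The plan is to realize each boundary point of $DS_n$ as the solution of a constrained extremal eigenvalue problem over the Birkhoff polytope $\Omega_n$, and then to force the optimizing matrix onto an edge of that polytope. By Corollary \ref{StarDSn}, $DS_n$ is compact and star-shaped from $0$, so along each ray from the origin the intersection with $DS_n$ is a single segment; consequently every boundary point has the form $r(\phi)e^{i\phi}$ and is the eigenvalue of largest modulus among all doubly stochastic eigenvalues of argument $\phi$. I would therefore parametrize $\partial DS_n$ by solving, for each angle $\phi$, the problem of maximizing $\operatorname{Re}(e^{-i\phi}\lambda)$ subject to $\operatorname{Im}(e^{-i\phi}\lambda)=0$, $A\in\Omega_n$, and $\lambda\in\sigma(A)$. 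By Lemma \ref{StandardRep} one may equivalently optimize over the convex hull of the standard representation, but keeping the permutation-matrix vertices of $\Omega_n$ in view is what makes the edge structure visible.

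Next I would invoke first-order eigenvalue perturbation. For a simple eigenvalue $\lambda(A)$ with right and left eigenvectors $x,y$ normalized by $y^*x=1$, the derivative along $A\mapsto A+tE$ is $y^*Ex=\langle M,E\rangle$, where $M=\overline{y}\,x^{\top}$ is the complex gradient matrix. Introducing a Lagrange multiplier $\mu$ for the equality constraint $\operatorname{Im}(e^{-i\phi}\lambda)=0$, the stationarity condition at a maximizer $A^*$ reads $\langle G,E\rangle\le 0$ for every feasible direction of $\Omega_n$, with the real cost matrix $G=\operatorname{Re}\!\big((1+i\mu)e^{-i\phi}M\big)$. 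Since $(1+i\mu)e^{-i\phi}$ is a nonzero complex scalar, $G$ is, up to a positive factor, of the form $\operatorname{Re}(e^{-i\theta}M)$ for an effective direction $\theta$ that absorbs the multiplier. The stationarity condition then says exactly that $A^*$ maximizes the linear functional $A\mapsto\langle G,A\rangle$ over $\Omega_n$, i.e. $A^*$ lies on the optimal face of the assignment linear program with cost $G$, a face spanned by the permutation matrices that solve $\max_\sigma\sum_i G_{i,\sigma(i)}$. Thus $A^*$ sits on a face of the Birkhoff polytope spanned by permutations, and the full content of the Boundary Conjecture is the assertion that this face may be taken to be an edge or a vertex.

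The decisive and hardest step is to bound the dimension of that optimal face. Two mechanisms compete: a generic cost $G$ is maximized at a single vertex, which would pin $\lambda_0$ to a permutation spectrum and hence to a root of unity on the unit circle—these are precisely the corner points of $\partial DS_n$—whereas the in-bent boundary arcs lie strictly inside the disc and therefore demand a degenerate assignment problem with several optimal permutations. One must prove that this degeneracy is never larger than necessary, so that the interplay between $G$ (built from the eigenvectors of $A^*$) and the eigenvalue constraint $\lambda_0\in\sigma(A^*)$ collapses the optimal face down to a single edge, i.e. a segment $\alpha P_1+(1-\alpha)P_2$. The line I would pursue is to show that if three or more permutations were required to carry $\lambda_0$, the surplus degrees of freedom on the optimal face could be used either to perturb $A^*$ within $\Omega_n$ so as to strictly increase $\operatorname{Re}(e^{-i\phi}\lambda)$ (contradicting maximality) or to retract $A^*$ onto a lower-dimensional subface still achieving $\lambda_0$. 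A natural template is the known characterization of the Karpelevich region $K_n$, where the boundary arcs are governed by an eigenvalue of a matrix on the segment joining two sparse stochastic matrices \cite{Matricial}; adapting that sparsity analysis from the stochastic polytope, whose facet structure is mild, to the far more rigid edge structure of the Birkhoff polytope—whose edges correspond to permutations differing by a single cycle—is exactly where I expect the argument to stall, and is why the statement is posed as a conjecture rather than a theorem.

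Finally I would treat the cases the smooth first-order analysis does not reach. At boundary points where the extremal eigenvalue is multiple, the derivative formula must be replaced by a perturbation analysis of the eigenvalue group and its spectral projector, and the assignment-LP reduction re-derived for the associated reduced problem; here the main risk is that the clean linear-functional optimality is lost and the face bound becomes harder to control. The radial parametrization, by contrast, already exhausts all of $\partial DS_n$, so no separate treatment of ``non-exposed'' points is needed—a genuine convenience of working with the star-shaped region rather than its convex hull. In summary, the reduction to an assignment problem over $\Omega_n$, the radial description of the boundary, and the handling of multiple eigenvalues are all within reach; the single open difficulty, and the crux of the conjecture, is the dimension bound that restricts the extremal face to at most two permutation matrices.
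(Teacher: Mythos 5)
This statement is a \emph{conjecture} in the paper, not a theorem: the authors support it only with heuristic arguments and computational evidence, so there is no proof of it to compare yours against, and indeed the problem is open. Your proposal, by your own admission, is also not a proof, and the gap you acknowledge is the entire content of the statement. Everything up to your ``decisive and hardest step'' merely places the optimizer $A^*$ on \emph{some} face of the Birkhoff polytope, which is vacuous—every point of $\Omega_n$ lies on a face; the conjecture \emph{is} the collapse of that face to an edge. Nothing in your first-order machinery can supply this: the cost matrix $G$ is built from the eigenvectors of $A^*$ itself, so the degeneracy of the resulting assignment problem is self-referential and a priori as large as the face containing $A^*$. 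A decisive sign that this gap is essential rather than technical comes from Section 6 of the paper: for the convex hulls of $A_4$ and $A_5$ in the standard representation, eigenvalues of triples escape the region bounded by eigenvalues of pairs, so the literal analogue of your face-to-edge collapse is \emph{false} for those groups. Your argument as outlined—stationarity, gradient matrix, optimal face spanned by group elements—applies verbatim to the hull of any finite matrix group; were it completable without invoking something special to $S_n$, it would prove a false statement. Any genuine proof must isolate a property of the full symmetric group that $A_4$ and $A_5$ lack, and your outline contains no such ingredient.

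There is also a genuine error earlier in the proposal: the claim that the radial parametrization ``already exhausts all of $\partial DS_n$.'' Star-shapedness from $0$ (Corollary \ref{StarDSn}) gives that each ray meets $DS_n$ in a segment $[0,R(\phi)e^{i\phi}]$, hence that every radially extremal point is a boundary point; it does not give the converse. Already for $n=3$, where $DS_3=PM_3=[-1,1]\cup\Pi_3$ is known exactly, the point $-3/4$ lies on $\partial DS_3$ (its non-real neighbors lie outside both $[-1,1]$ and $\Pi_3$, whose leftmost edge is the line $\mathrm{Re}(z)=-1/2$) yet is not radially extremal, since $R(\pi)=1$. Such ``whisker'' points are invisible to your extremal problem, so even the well-posed part of your programme does not reach all of the boundary; here they happen to be eigenvalues of pairs (convex combinations of $I$ and a transposition), but your framework cannot detect that and would need a separate argument. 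Together with the unaddressed constraint-qualification and multiple-eigenvalue issues, the proposal should be read as an optimization-theoretic reformulation of the conjecture—a reasonable research programme—rather than a proof.
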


If this conjecture were true, then by Corollary \ref{StarDSn}, the star-shapedness of $DS_n$ gives

\begin{Conj}
Every complex number in $DS_n$ is achieved as an eigenvalue of a convex combination of three or fewer permutations. More specifically, convex combinations of the form 
\[\alpha_1 P + \alpha_2 Q + (1-\alpha_1 - \alpha_2)I \text{\qquad $P,Q$ permutations, $\alpha_1, \alpha_2 \geq 0$, $\alpha_1 + \alpha_2 \leq 1 $}\]
achieve every eigenvalue in $DS_n$.
\end{Conj}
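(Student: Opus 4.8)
The plan is to deduce this statement directly from the Boundary Conjecture, exploiting star-shapedness with the \emph{specific} center $s = 1$ supplied by Corollary~\ref{StarDSn}, together with the compactness of $DS_n$. The guiding observation is that among all the admissible centers in $[0,1]$ furnished by that corollary, the choice $s = 1$ is the only one that places the leftover convex coefficient onto the identity matrix, producing exactly the prescribed shape $\alpha_1 P + \alpha_2 Q + (1-\alpha_1-\alpha_2)I$. So I would fix $s = 1$ throughout and aim to write every $\lambda \in DS_n$ as a convex blend of $1$ and a single boundary point.

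First I would record that $DS_n$ is compact, which is what lets a boundary point be attained \emph{inside} $DS_n$. Indeed $\Omega_n$ is compact and $DS_n \subseteq \{z \in \CC : |z| \le 1\}$, so the set $\{(A,z) \in \Omega_n \times \{|z| \le 1\} : \det(zI - A) = 0\}$ is closed, hence compact, and $DS_n$ is its projection onto the $z$-coordinate. Then, given $\lambda \in DS_n$ with $\lambda \neq 1$ (the case $\lambda = 1$ is immediate, taking $\alpha_1 = \alpha_2 = 0$), I would run the ray $r(\alpha) = 1 + \alpha(\lambda - 1)$ for $\alpha \ge 0$. Since $DS_n$ is bounded and $|r(\alpha)| \to \infty$, the supremum $\alpha^{*} = \sup\{\alpha \ge 0 : r(\alpha) \in DS_n\}$ is finite, and $\alpha^{*} \ge 1$ because $r(1) = \lambda \in DS_n$. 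By closedness $\mu := r(\alpha^{*}) \in DS_n$, and by maximality $\mu \in \partial DS_n$. Writing $t = 1/\alpha^{*} \in (0,1]$ then gives the reverse-star-shapedness relation $\lambda = (1-t)\cdot 1 + t\mu$.

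Finally I would invoke the Boundary Conjecture on the boundary point $\mu$: there exist permutation matrices $P, Q$ and $\beta \in [0,1]$ with $\mu \in \sigma(M)$, where $M = \beta P + (1-\beta)Q$. Since $I$ commutes with $M$, every eigenvalue $\nu$ of $M$ maps to $t\nu + (1-t)$ under $M \mapsto tM + (1-t)I$, preserving eigenvectors; in particular this matrix has the eigenvalue $t\mu + (1-t) = \lambda$. Setting $\alpha_1 = t\beta \ge 0$ and $\alpha_2 = t(1-\beta) \ge 0$, one has $\alpha_1 + \alpha_2 = t \le 1$ and $tM + (1-t)I = \alpha_1 P + \alpha_2 Q + (1-\alpha_1-\alpha_2)I$, which is the required form and is manifestly a convex combination of the three permutation matrices $P$, $Q$, $I$.

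The substantive points, and the only places where care is needed, are the forced choice of the center $s = 1$ (so the leftover coefficient lands on $I$ rather than on a generic $sI$, matching the exact shape asserted) and the compactness of $DS_n$ (so that the ray actually terminates at a boundary point lying in $DS_n$, to which the Boundary Conjecture can be applied). Everything else reduces to the elementary algebra of the scalar shift $M \mapsto tM + (1-t)I$, which shifts each eigenvalue affinely while fixing eigenvectors.
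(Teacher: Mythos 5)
Your proposal is correct and is essentially the paper's own (implicit) argument: the paper derives this conjecture from the Boundary Conjecture via Corollary~\ref{StarDSn} in a single line, and your write-up is exactly that derivation made precise---star-shapedness from the center $s=1$ (so the residual weight lands on the identity permutation matrix), a ray from $1$ through $\lambda$ terminating at a boundary point, and the eigenvalue-shift mechanism of Lemma~\ref{StarShaped}. The only addition is your compactness/closedness argument for $DS_n$, a detail the paper leaves unstated but which is genuinely needed so that the ray's terminal point lies in $DS_n$ and the Boundary Conjecture can be applied to it.
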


There is much evidence to suggest that the boundary of $DS_n$ is determined by pairs of permutations. For any $n$ such that $DS_n = PM_n$, this is true. This is due to the fact that, as can be seen by the reasoning in the proof of Lemma \ref{Polygons}, any edge of $\Pi_k$ is attained by eigenvalues along a suitable pair of a k-cycle and one of its powers. Moreover, any doubly stochastic matrix that has all eigenvalues of magnitude $1$ is in fact a permutation matrix \cite{PM}, so in this sense the matrices with the most extremal eigenvalues are the permutation matrices. Perhaps taking convex combinations of more than two permutations matrices has an averaging effect on the eigenvalues. In fact, in the standard representation, taking the average over all group elements gives the zero matrix (since it is an irreducible representation), which has all $0$ eigenvalues that are as far into the interior of $DS_n$ as possible. Further, the Karpelevich region consisting of all eigenvalues achievable by row stochastic matrices has boundary determined by convex combinations of pairs of certain row stochastic matrices \cite{Matricial}. Lastly, our computations have not found any eigenvalue on the boundary of $DS_n$ that does not belong to a convex combination of pairs of permutations.

In language and notation, we use the standard correspondence between permutations $\sigma \in S_n$ with matrices $P \in M_n$, where the matrix associated with $\sigma$ has entries $P_{ij} = 1$ if $\sigma(i) = j$ and $0$ otherwise. Moreover, we use standard cycle notation for permutations, and say a permutation in $S_n$ has cycle type $n_1, \ldots, n_k$ if it has disjoint cycles of length $n_1, \ldots, n_k$ and $\sum_i n_i = n$. Two sets of matrices $\mc{F}, \mc{F}' \subseteq M_n$ are \textit{uniformly similar} if there exists nonsingular $S$ such that $\mc{F} = \{S A S^{-1} : A \in \mc{F}'\}$. If $S$ can be taken to be a permutation matrix we say that the sets are \textit{uniformly permutation similar}. We make the analogous definition for sets of permutations in $S_n$ to be \textit{uniformly conjugate}.

Even though Perfect and Mirsky did not explicitly make such a statement in their original paper \cite{PM}, we refer to the Perfect-Mirsky conjecture for a given $n$ as the conjecture that $DS_n = PM_n$.

\section{The Computational Approach and Inequivalent Pairs}

In later sections we discuss some of the computational evidence that supports the Boundary Conjecture. In this section, we outline the computational approach to obtaining such evidence as well as information about $DS_n$ for $n \geq 5$. We search for counterexamples to the Perfect-Mirsky conjecture by computing the eigenvalues of those doubly stochastic matrices that are convex combinations of pairs of permutations. If the conjecture is true, then for any $n$ where $PM_n \neq DS_n$, there will exist an eigenvalue in $DS_n$ outside of $PM_n$ that comes from a pair of permutations, so that this approach will find a counterexample if the precision is taken fine enough. This approach dramatically reduces the problem size, and makes tractable the computation of $DS_n$ to reasonable precision for higher $n$. To control precision, we choose a \textit{mesh size}, meaning the number of matrices along a convex combination of two matrices whose eigenvalues we compute. For instance, for a mesh size of $11$, and for a pair of permutations $P$ and $Q$, we compute eigenvalues of $P, \frac{9}{10}P + \frac{1}{10}Q, \ldots,\frac{1}{10}P + \frac{9}{10}Q,$ and $Q$.

There are $O(\binom{n!}{2}) = O((n!)^2)$ pairs of permutations to consider. This can be significantly reduced with the following two lemmas. For each cycle type of $S_n$, choose a representative permutation matrix $C$ of that cycle type that is a direct sum of cyclic permutation matrices (in descending order of length). We call such a $C$ the \textit{canonical cycle form} of any permutation matrix representing a permutation of the same cycle type.

\begin{lemma}
Every pair of permutations $P_1, P_2$ is uniformly similar to a pair of permutations $C, R$, in which $C$ is the canonical cycle form of $P_1$.
\end{lemma}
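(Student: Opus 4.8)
The plan is to reduce this to the classical fact that two permutations in $S_n$ are conjugate if and only if they have the same cycle type. By construction, $C$ is the canonical cycle form associated to the cycle type of $P_1$, so the permutation underlying $P_1$ and the permutation underlying $C$ share a cycle type and are therefore conjugate in $S_n$. Concretely, there is a permutation $\sigma \in S_n$ with $\sigma \pi_1 \sigma^{-1} = \gamma$, where $\pi_1$ and $\gamma$ are the permutations corresponding to $P_1$ and $C$ respectively.

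First I would pass from $\sigma$ to its permutation matrix $S$ and record the two facts that make the whole argument go through. First, conjugation by the permutation matrix $S$ on the matrix side mirrors conjugation by $\sigma$ on the group side, so that $S P_1 S^{-1} = C$. Second---and this is exactly the reason for insisting that $S$ be a permutation matrix rather than an arbitrary invertible one---conjugation by $S$ carries the set of permutation matrices into itself, since $S P S^{-1}$ corresponds to a conjugate of the permutation underlying $P$. Hence $R := S P_2 S^{-1}$ is automatically again a permutation matrix, with no further work required.

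Because the single matrix $S$ simultaneously sends $P_1 \mapsto C$ and $P_2 \mapsto R$, the pair $\{P_1, P_2\}$ is uniformly similar to $\{C, R\}$, which is precisely the claim. In fact, since $S$ can be taken to be a permutation matrix, one obtains the stronger conclusion that the two pairs are uniformly permutation similar.

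I do not expect a genuine obstacle here; the difficulties are bookkeeping rather than substance. The two points needing care are fixing the permutation-to-matrix convention (so that the product and conjugation relations come out as a homomorphism, up to the usual transpose or inverse) and verifying the standard claim that conjugation by a permutation matrix preserves permutation matrices. The real input is the conjugacy-class classification for $S_n$, which I would cite rather than reprove. The value of the lemma is organizational: it trims the search from all pairs of permutations down to those whose first element is a fixed canonical representative of each cycle type.
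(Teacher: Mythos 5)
Your proof is correct and follows essentially the same route as the paper: both arguments pick a permutation matrix conjugating $P_1$ to $C$ (the paper writes $Q P_1 Q^T$, which is the same as your $S P_1 S^{-1}$ since permutation matrices are orthogonal) and apply that same conjugation to $P_2$ to obtain $R$. Your added observation that the similarity is in fact a uniform \emph{permutation} similarity is also how the paper implicitly uses the lemma.
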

\begin{proof}
Since $C$ is of the same cycle type as $P_1$, we can choose a permutation matrix $Q$ that conjugates $P_1$ to $C$. Then $C = QP_1 Q^T$ and $R = QP_2 Q^T$ gives the desired pair of permutations.
\end{proof}

\begin{lemma}\label{PermutationSimilarity}
Two permutation matrices are similar if and only if they are similar by a permutation similarity.
\end{lemma}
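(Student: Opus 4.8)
The forward direction is immediate, since a permutation similarity is a special case of a similarity; so the whole content is in the converse. My plan is to reduce the matrix statement to the classical group-theoretic fact that two permutations in $S_n$ are conjugate within $S_n$ if and only if they have the same cycle type. Conjugation of one permutation matrix to another by a permutation matrix $Q$ is precisely conjugation of the underlying permutations by the element of $S_n$ corresponding to $Q$, so ``permutation similar'' and ``conjugate in $S_n$'' name the same relation on permutation matrices. Hence it suffices to show that if two permutation matrices $P_1, P_2$ are similar (say over $\CC$), then they have the same cycle type; the desired $Q$ is then produced by the standard construction that maps one cycle decomposition onto the other, matching cycles of equal length.

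The engine of the argument is that cycle type is a similarity invariant. Let $\sigma$ be the permutation of $P$, and let $c_k$ denote the number of $k$-cycles of $\sigma$. For any positive integer $j$, an index $i$ is fixed by $\sigma^j$ exactly when the length of the cycle containing $i$ divides $j$, so
\begin{equation}\label{tracecount}
\operatorname{tr}(P^j) \;=\; \#\{\, i : \sigma^j(i) = i \,\} \;=\; \sum_{k \,\mid\, j} k\, c_k .
\end{equation}
Because $P_1$ and $P_2$ are similar, $P_1^j$ and $P_2^j$ are similar for every $j$, and similar matrices have equal trace; thus the left-hand side of \eqref{tracecount} agrees for $P_1$ and $P_2$ for all $j \le n$.

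It then remains to recover the numbers $c_k$ from the invariants $a_j := \operatorname{tr}(P^j)$. Setting $a_j = \sum_{k \mid j} k\, c_k$ and applying M\"obius inversion over the divisor lattice yields $j\, c_j = \sum_{k \mid j} \mu(j/k)\, a_k$, so each $c_j$ is determined by the $a_k$ and hence coincides for $P_1$ and $P_2$; this gives equal cycle type and completes the reduction. (An alternative route replaces \eqref{tracecount} by the cyclotomic factorization of the characteristic polynomial, $\det(xI - P) = \prod_k (x^k-1)^{c_k} = \prod_d \Phi_d(x)^{e_d}$ with $e_d = \sum_{k:\, d \mid k} c_k$; uniqueness of this factorization recovers the $e_d$, and M\"obius inversion again recovers the $c_k$.) The only genuinely non-routine point, and the step I would be most careful to justify, is this inversion: one must check that the chosen similarity invariant really does pin down the \emph{full} cycle type rather than merely coarser data such as the eigenvalue magnitudes or the total number of cycles, so that, say, cycle type $(2,1,1)$ is not conflated with a single $4$-cycle. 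Everything else is standard linear algebra together with the cycle-type description of conjugacy in $S_n$.
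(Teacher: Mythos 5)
Your proof is correct, and it is complete at precisely the point where the paper's own proof is terse. Both arguments share the same skeleton: reduce the lemma to the claim that similarity of permutation matrices forces equality of cycle type, then invoke the standard fact that equal cycle type is equivalent to conjugacy in $S_n$, i.e.\ to permutation similarity. The difference lies in how the first step is carried out. The paper observes that permutation matrices are diagonalizable (being of finite order), so similarity is equivalent to equality of eigenvalue multisets, and then asserts that ``the multiset of eigenvalues is determined by cycle-type.'' Read literally, that is the direction not needed for the hard implication: to get from ``similar'' to ``permutation similar'' one needs the \emph{injectivity} of the map from cycle types to eigenvalue multisets, i.e.\ that the spectrum pins down the cycle type, and the paper leaves this implicit. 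Your proof supplies exactly this step, by a slightly different device: traces of powers $\operatorname{tr}(P^j)$ interpreted as fixed-point counts, giving $a_j=\sum_{k\mid j}k\,c_k$, followed by M\"obius inversion $j\,c_j=\sum_{k\mid j}\mu(j/k)\,a_k$. This buys two things: you never need diagonalizability (traces of powers are similarity invariants for arbitrary matrices), and the step you flag as the non-routine one---checking that the invariant recovers the full cycle type rather than coarser data---is precisely the gap in the paper's write-up. Your parenthetical alternative via the cyclotomic factorization $\prod_d \Phi_d(x)^{e_d}$ is essentially the rigorous completion of the paper's own eigenvalue-multiset route. What the paper's version buys in exchange is brevity and a framing consistent with its stated aim of using representation-theoretic perspectives.
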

\begin{proof}
All permutation matrices are diagonalizable, being representations of $S_n$. Thus, two permutation matrices are similar if and only if they have the same multiset of eigenvalues. The multiset of eigenvalues is determined by cycle-type, and two permutation matrices are similar by a permutation similarity if and only if they have the same cycle-type.
\end{proof}

Thus, to compute the eigenvalues that are achievable by convex combinations of any pairs of permutation matrices, we need only consider eigenvalues achievable by convex combinations of each cycle type's canonical form with other permutation matrices. There are $p(n) \cdot n!$ such pairs, in which $p(n)$, the number of partitions of size $n$, counts the number of cycle types of permutations of size $n$. This also provides a simple reduction for convex combinations of more than two permutation matrices---for three matrices, $O(p(n)\cdot (n!)^2 )$ triples need to be computed if we consider triples consisting of each cycle type's canonical form and two other permutations in $S_n$.

In fact, the number of pairs to be considered can be reduced further with more precise utilization of uniform permutation similarity. For any pairs of permutations that are equal up to uniform conjugation, we need only consider one representative pair since similarity preserves eigenvalues. Consider the group $S_n \times S_n$, the direct product of the symmetric group with itself, and the action of diagonal conjugation on this group by $S_n$---meaning that the permutation $p$ acts on $(\sigma, \tau)$ by $(\sigma, \tau)^p = (p \sigma p^{-1}, p \tau p^{-1})$. Then we need only consider one representative pair from each orbit of this action on $S_n \times S_n$. Here we introduce some necessary concepts from group theory that we use to derive a computationally feasible method to obtain such representative pairs. See \cite{Bouc} for a reference for definitions and results.

A $G$-set is a set with a group action by a group $G$. Two $G$-sets $X$ and $Y$ are isomorphic, denoted $X \cong Y$, if there is a bijection $f: X \to Y$ such that $f(g\cdot x) = g\cdot f(x)$. Let $\bigsqcup$ denote the disjoint union operation on $G$-sets, and let $X \times Y$ denote the product of two $G$-sets, meaning the $G$-set that is the set given by the regular cartesian product of $X$ and $Y$ along with the action $g\cdot(x,y) = (g\cdot x, g\cdot y)$. For a subset $S \subseteq G$, let $\mc{O}(x)$ denote the orbit of an element $x$ under the action of conjugation on $S$ by $G$, $C_G(x)$ denote the centralizer in $G$ of an element $x$, and $\mc{C}$ denote a set of representatives of each conjugacy class of $G$.  Then we have the following isomorphisms of $G$-sets by basic properties of orbit-stabilizer relationships and distributivity:
\begin{align*}
G \times G \cong & \Big(\bigsqcup_{x \in \mc{C}} \mc{O}(x) \Big) \times \Big( \bigsqcup_{y \in \mc{C}} \mc{O}(y)\Big)\\
\cong & \Big(\bigsqcup_{x \in \mc{C}} G/C_G(x) \Big) \times \Big( \bigsqcup_{y \in \mc{C}} G / C_G(y)\Big)\\
\cong & \bigsqcup_{x \in \mc{C}} \; \bigsqcup_{y \in \mc{C}} \big(G/C_G(x) \big) \times \big( G/ C_G(y) \big)
\end{align*}

Now, for subgroups $H, K \leq G$, define the double coset $H g K = \{h g k : h \in H, k \in K\}$ for $g \in G$, the double coset space $H \backslash G / K = \bigcup_{g \in G} H g K$, and for any choice of representatives of each double coset in $H \backslash G / K$ denote the set of all representatives by $[H \backslash G / K]$. We will make use of Mackey's formula for $G$-sets, which can be found in \cite{Bouc}:

\[(G / H) \times (G/ K) \cong \bigsqcup_{g \in [H \backslash G / K]} G / (H \cap g K g^{-1}) \]

Applying Mackey's formula with $H = C_G(x)$ and $K = C_G(y)$ gives us that

\begin{align*}
G \times G \cong & \bigsqcup_{x \in \mc{C}} \; \bigsqcup_{y \in \mc{C}} \; \bigsqcup_{g \in [C_G(x) \backslash G / C_G(y)] } G/(C_G(x) \cap gC_G(y)g^{-1}) \\
\cong & \bigsqcup_{x \in \mc{C}} \; \bigsqcup_{y \in \mc{C}} \; \bigsqcup_{g \in [C_G(x) \backslash G / C_G(y)] } G/(C_G(x) \cap C_G(gyg^{-1}))
\end{align*}

Thus, with $G = S_n$, the representative pairs from the orbits of the action of diagonal conjugation on $S_n \times S_n$ are in one-to-one correspondence with the tuples $(x, y, g)$, where $x$ and $y$ are representatives of conjugacy classes of $S_n$ and $g$ is a representative of the double cosets of $C_G(x)$ and $C_G(y)$. Since the eigenvalues of convex combinations of $(\sigma, \tau)$ are equal to those of $(\tau, \sigma)$, we need only consider about half of these representative pairs. We define a relation on the set of pairs of permutations, where two pairs $(\sigma_1, \tau_1)$ and $(\sigma_2, \tau_2)$ are related if they are uniformly permutation similar to each other, or if they are uniformly permutation similar after a reversal of one, meaning $(\sigma_1, \tau_1)^p = (\tau_2, \sigma_2)$ for some $p \in S_n$. It can be seen that this relation is an equivalence relation---we call the distinct equivalence classes \textit{inequivalent pairs}.

There exist double-coset representative enumeration algorithms due to the study of computational group theory which we can take advantage of in our computation of the inequivalent pairs. We use routines from the GAP computer algebra system for double-coset representative enumeration and centralizer computations \cite{GAP}. Algorithm \ref{alg:1} is our algorithm for determining the inequivalent pairs.

\begin{algorithm} 
		\caption{Compute Inequivalent Pairs}
        \label{alg:1}
		\begin{algorithmic}[1]
				\For {$\sigma$ one representative of cycle type $i \in \{1, \ldots, p(n)\}$}
				    \For {$\tau$ one representative of cycle type $j \in \{i, \ldots, p(n)\}$}
				        \State Compute centralizers $C_G(\sigma)$ and $C_G(\tau)$
    				    \State Compute representatives of double cosets $[C_G(\sigma) \backslash G / C_G(\tau)]$
    				    \For {$g$ in $[C_G(\sigma) \backslash G / C_G(\tau)]$}
    				        \State Add $(\sigma, g\tau g^{-1})$ to \texttt{pairsList}
    				    \EndFor
				    \EndFor
				\EndFor
				\State \Return \texttt{pairsList}
		\end{algorithmic}
\end{algorithm}

In Table \ref{tab:numpairs} we list the number of inequivalent pairs in comparison to $p(n) \cdot n!$, the number of pairs considered under our initial method without this reduction. Let $a(n)$ be the number of equivalence classes of pairs in $S_n \times S_n$, where uniformly conjugate pairs are equivalent (this is also the number of orbits in $S_n \times S_n$ under the action of uniform conjugation by $S_n$), and $b(n)$ the number of such classes where any representative of the class is a pair $(\sigma, \tau)$ where $\sigma$ and $\tau$ have the same cycle type. Then the number of inequivalent pairs is exactly $(a(n) + b(n))/2$. $a(n)$ is known to grow at $O(n!)$ \cite{OEIS}, so the number of inequivalent pairs also grows at $O(n!)$. Thus the difference is substantial between the number of pairs considered with the naive method and the number of inequivalent pairs. Since Algorithm \ref{alg:1} computes the inequivalent pairs quickly for $n \leq 12$, we find it makes a major difference in our computation. Compared to computing the pairs of canonical cycle types with permutations, computing only eigenvalues for inequivalent pairs saves time and memory, so this reduction is useful for both searching for counterexamples and plotting in fine mesh sizes.

\begin{table}[ht]
\centering
\begin{tabular}{|c|c|c|}
        \hline
		$n$ & inequivalent pairs & $p(n) \cdot n!$\\
		\hline
		$2$ & $3$ & $4$\\
		$3$ & $8$ & $18$\\
		$4$ & $28$ & $120$\\
		$5$ & $98$ & $840$\\
		$6$ & $518$ & $7,\!920$\\
		$7$ & $3{,}096$ & $75{,}600$\\
		$8$ & $23{,}415$ & $887{,}040$\\
		$9$ & $201{,}795$ & $10{,}886{,}400$\\
		$10$ & $1{,}973{,}189$ & $152{,}409{,}600$\\
		$11$ & $21{,}347{,}935$ & $2{,}235{,}340{,}800$\\
		$12$ & $253{,}282{,}652$ & $36{,}883{,}123{,}200$\\
		$13$ & $3{,}263{,}902{,}430$ & $628{,}929{,}100{,}800$ \\
		\hline
\end{tabular}
\caption{Number of inequivalent pairs compared to $p(n) \cdot n!$, where $p$ is the partition function.}
\label{tab:numpairs}
\end{table}

Now, we discuss other details of our computations of $DS_n$. Since doubly stochastic matrices have real elements, their complex eigenvalues come in conjugate pairs, so we only consider those in the upper half plane. Moreover, since $\Pi_2 = [-1,1]$ is always included in $DS_n$ for $n \geq 2$, we need not consider any real eigenvalues since all of them are accounted for. Similarly, using the deflation as in Lemma \ref{StandardRep}, only $n-1$ by  $n-1$ matrices need to be handled to compute $DS_n$ as the shared eigenvalue of $1$ can be deflated away by the matrix in (\ref{deflation}). To determine whether an eigenvalue $\lambda$ is within the region, we first use Lemma \ref{inradiuslemma} to see that any eigenvalue $\lambda$ with magnitude $|\lambda| \leq \cos(\frac{\pi}{n})$ cannot be an exception to Perfect-Mirsky. Otherwise, for each $k \leq n$ we check which vertices of $\Pi_k$ are the ones that $\mrm{Re}(\lambda)$ lies between, by checking which of the intervals $\big[\cos(\frac{2\pi j}{k}), \cos(\frac{2\pi (j+1)}{k}) \big)$ is the one that contains $\mrm{Re}(\lambda)$. Then we check whether $\mrm{Im}(\lambda)$ satisfies the linear inequality determined by $\mrm{Re}(\lambda)$ and $j$ that defines this side of the polygon $\Pi_k$.

\section{Examination of \texorpdfstring{$DS_5$}{}}

In \cite{MR}, the following $5$-by-$5$ doubly stochastic matrix was noted to have eigenvalues outside of $PM_5$ for $t \in [.49, .51]$:

\[\begin{bmatrix}
0 & 0 & 0 & 1 & 0\\
0 & 0 & t & 0 & 1-t\\
0 & t & 1-t & 0 & 0\\
0 & 1-t & 0 & 0 & t\\
1 & 0 & 0 & 0 & 0\\
\end{bmatrix}\]

We note that that these matrices are given by convex combinations of pairs of matrices, corresponding to
\[t \begin{bmatrix}
0 & 0 & 0 & 1 & 0\\
0 & 0 & 1 & 0 & 0\\
0 & 1 & 0 & 0 & 0\\
0 & 0 & 0 & 0 & 1\\
1 & 0 & 0 & 0 & 0\\
\end{bmatrix} + (1-t) \begin{bmatrix}
0 & 0 & 0 & 1 & 0\\
0 & 0 & 0 & 0 & 1\\
0 & 0 & 1 & 0 & 0\\
0 & 1 & 0 & 0 & 0\\
1 & 0 & 0 & 0 & 0\\
\end{bmatrix}\]

\noindent in which the two permutations matrices correspond to $(145)(23)$ and $(1425)$ respectively. Using high-precision arithmetic, we have computed that the range of convex coefficients $t$ for which the exceptional eigenpath of this matrix has eigenvalues outside $PM_5$ is about [.4705275, .5490013]. The exceptional curve in the upper half plane connects a third root of unity at $e^{2\pi i/3}$ with a fourth root of unity at $i$. It leaves $PM_5$ near the intersection of $\Pi_3$ with $\Pi_4$ and barely stays within $PM_5$ near the intersection of $\Pi_4$ and $\Pi_5$. See Figures \ref{fig:zoomexception} and \ref{fig:zoomtriples} for images of this curve. Moreover, we note that only one other eigenpath is close to the boundary of $PM_5$. This path is given by a pairing between $(1425)$ and $(12345)$.

The pair of cycle types for the two permutations that generate the exceptional curve are of note. This exception is present in $DS_5$, and does not include a 5-cycle. Instead, the element of the pair that is present in $S_5$ but not in $S_4$ is of type 2,3. The following is an important observation that demonstrates why this curve is truly exceptional.

\begin{observation}
There is only one inequivalent pair, up to uniform permutation similarity, that generates an eigenvalue outside of $PM_5$.
\end{observation}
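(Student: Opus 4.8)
The plan is to turn the claim into a finite, certifiable check over the inequivalent pairs of $S_5$, after two reductions that eliminate the large majority of them.

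First, by the reduction of Section 3, uniform permutation similarity and reversal preserve the multiset of eigenvalues of every convex combination $tP+(1-t)Q$; hence it suffices to test one representative from each of the $98$ inequivalent pairs listed for $n=5$ in Table \ref{tab:numpairs}. For each such pair I would pass to the standard representation via the deflation of Lemma \ref{StandardRep}, so that the eigenpaths live among $4\times 4$ matrices and the common eigenvalue $1$ (always in $PM_5$) is removed. Because the matrices are real, eigenvalues come in conjugate pairs, and $\Pi_2=[-1,1]\subseteq PM_5$ covers all real eigenvalues; so only the non-real eigenvalues in the closed upper half-plane must be tracked.

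Second---and this is the reduction that does most of the work---I would discard every pair $(P,Q)$ for which the group $\langle P,Q\rangle$ acts intransitively on $\{1,\dots,5\}$. For such a pair the orbit decomposition makes $P$ and $Q$ simultaneously block-diagonal, with blocks being permutation matrices on orbits of size at most $4$; hence $tP+(1-t)Q$ is block-diagonal with doubly stochastic blocks of size $\le 4$, and its spectrum lies in $\bigcup_{k\le 4}DS_k=\bigcup_{k\le 4}PM_k=PM_4\subseteq PM_5$, using $DS_k=PM_k$ for $k\le 4$. Thus only pairs generating a transitive subgroup can contribute an eigenvalue outside $PM_5$, leaving a short list to examine.

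Third, for each surviving transitive pair I would study the real algebraic eigenpath $\lambda(t)$ of $A(t)=tP'+(1-t)Q'$: its characteristic polynomial is degree $4$ in $\lambda$ with coefficients polynomial in $t$ over $\mathbb{Q}$. Membership in $PM_5=\bigcup_{k\le 5}\Pi_k$ is decided by the test of Section 3: by Lemma \ref{inradiuslemma} any eigenvalue with $|\lambda|\le\cos(\pi/5)$ is automatically inside, so only eigenvalues of larger magnitude are candidates, and for these ``leaving $PM_5$'' means violating one of the finitely many linear inequalities defining an edge of $\Pi_3,\Pi_4,\Pi_5$ in the appropriate angular sector. For a fixed candidate edge, eliminating $\lambda=x+iy$ from the characteristic polynomial by a resultant in $t$ (or a Sturm sign count) reduces the question ``does the curve cross this edge for some $t\in[0,1]$?'' to a decidable statement about a univariate rational polynomial. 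Carried out exactly, this is expected to show that precisely the pair of cycle types $3,2$ and $4,1$---the representative $(145)(23)$, $(1425)$ identified above---leaves $PM_5$, while all other transitive pairs stay inside.

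The main obstacle is certification rather than computation: exhibiting the exceptional pair is immediate, since one merely evaluates $\lambda(t)$ at an interior $t$ and checks a violated inequality, but proving that no other pair leaves $PM_5$ demands a rigorous verification. The danger is that some paths graze $\partial PM_5$ extremely closely---in particular the $(1425)$, $(12345)$ path noted above, which barely stays inside---so floating-point evaluation cannot by itself distinguish ``tangent but inside'' from ``slightly outside.'' Near such near-tangencies I would therefore rely on exact arithmetic (resultants and Sturm sequences over $\mathbb{Q}$) or on validated interval arithmetic, which is where the genuine care is required.
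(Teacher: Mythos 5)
Your overall strategy coincides with the paper's: both reduce the claim to a finite check over the $98$ inequivalent pairs of $S_5$ (via uniform permutation similarity and reversal, as in Section 3 and Table \ref{tab:numpairs}), then test each eigenpath for membership in $PM_5$ using the sector-and-edge test. Where you differ is in execution, and the differences are genuine. First, your transitivity reduction is not in the paper: observing that an intransitive pair $(P,Q)$ is simultaneously block-diagonal over the orbits of $\langle P,Q\rangle$, so that every eigenvalue of $tP+(1-t)Q$ lies in $\bigcup_{k\le 4}DS_k=PM_4\subseteq PM_5$ (using $DS_4=PM_4$), disposes of many pairs with no computation at all; this is a clean theoretical addition. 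Second, you propose exact certification (resultants, Sturm sequences, validated intervals) where the paper relies on floating-point eigenvalue computation on a finite mesh, supplemented by high-precision arithmetic along the exceptional curve; your version is what would be needed to upgrade the Observation from a computational finding to a theorem, and you correctly identify the path generated by $(1425)$ and $(12345)$, which barely stays inside $PM_5$, as the place where certification is delicate. Conversely, the paper does something you do not: it gives a short group-theoretic argument that the pairings of cycle type $2,3$ with type $1,4$ fall into exactly $|\mathcal{B}|/|C_{S_5}((1425))|=20/4=5$ inequivalent classes (fixing $(1425)$, the classes are orbits of the centralizer $\langle(1425)\rangle$ acting by conjugation on the twenty $2,3$-type permutations, with trivial stabilizers), and similarly five classes for pairings of $(1425)$ with $3$-cycles; this localizes the exceptional class among its cycle-type neighbors, which your global enumeration does not attempt.

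One small correction to your membership test: lying outside $PM_5=\bigcup_{k\le 5}\Pi_k$ is a conjunction, not a disjunction --- a non-real eigenvalue leaves $PM_5$ only if it simultaneously violates the relevant edge inequality of each of $\Pi_3$, $\Pi_4$, and $\Pi_5$ (each in its own sector). As phrased (``violating one of the finitely many linear inequalities''), your test would flag every curve that exits any single polygon, producing false positives throughout; implemented as the conjunction over $k=3,4,5$, your scheme is sound.
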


Take the original exceptional pair $(145)(23)$ and $(1425)$. We determine the inequivalent pair classes of this pairing of cycle types 2,3 with 1,4. Without loss of generality, we can fix $(1425)$, since if we want to determine the class of a pair with another 4-cycle, we can first take a uniform conjugacy to shift it to a pair with $(1425)$. Thus, any further conjugation must be by an elements of $C_{S_n}((1425)) = \langle (1425) \rangle$, the centralizer of $(1425)$. We can view the inequivalent pair classes of this pairing of cycle types as the orbits of the action of $C_{S_n}((1425))$ on the set $\mc{B}$ of elements of cycle type 2,3 by conjugation. The stabilizer of the action for any $\tau \in \mc{B}$ is trivial due to the cycle structures, so there are precisely $|\mc{B}| / |C_{S_n}((1425))| = 20/4 = 5$ orbits. Computing the eigenpaths of pairings $(r, (1425))$, where $r$ is a representative of each orbit, we find that only the inequivalent pair class of the original counterexample gives an eigenpath that leaves $PM_5$; in fact, all of the eigenpaths from other inequivalent pair classes stay well inside $PM_5$. Lastly, since the exceptional curve goes from a third root of unity to a fourth root of unity, we check pairings of $(1425)$ with 3-cycles (there are again $5$ inequivalent pair classes) and again find that no path even comes close to leaving $PM_5$.

In Table \ref{tab:pairs}, we list the inequivalent pair classes of pairings of permutations of cycles types 2,3 with 1,4. Understanding the differences between the inequivalent pair classes of pairs of these cycle types may be essential to understanding why the exceptional pair of permutations leaves $PM_5$.

\begin{table}[ht]
    \centering
\begin{tabular}{|c|c|}
\hline
     Class & 2,3 type Permutations of Class  \\
     \hline
     1 & $(34)(125),$ $(35)(142),$ $(23)(145),$ $(13)(254)$\\
     \hline
     2 & $(12)(345),$ $(12)(354),$ $(45)(132),$ $(45)(123)$\\
     \hline
     3 & $(35)(124),$ $(34)(152),$ $(13)(245),$ $(23)(154)$\\
     \hline 
     4 & $(24)(135),$ $(15)(234),$ $(25)(143),$ $(14)(253)$\\
     \hline
     5 & $(25)(134),$ $(14)(235),$ $(15)(243),$ $(24)(153)$\\
     \hline
\end{tabular}
\caption{Inequivalent pair classes and all representatives of each class with respect to pairing a 2,3 permutation with $(1425)$. To determine which inequivalent pair class a general pair $(\sigma, \tau)$ corresponds to, where $\sigma$ has type 2,3 and $\tau$ has type 1,4, uniformly conjugate $(\sigma, \tau)^p$ by a $p \in S_n$ which conjugates $\tau$ to $(1425)$, then check which row of the table $p\sigma p^{-1}$ is contained in. Note that the exceptional pair is in class $1$.}
\label{tab:pairs}
\end{table}

\begin{figure}
    \centering
    \includegraphics[scale=1]{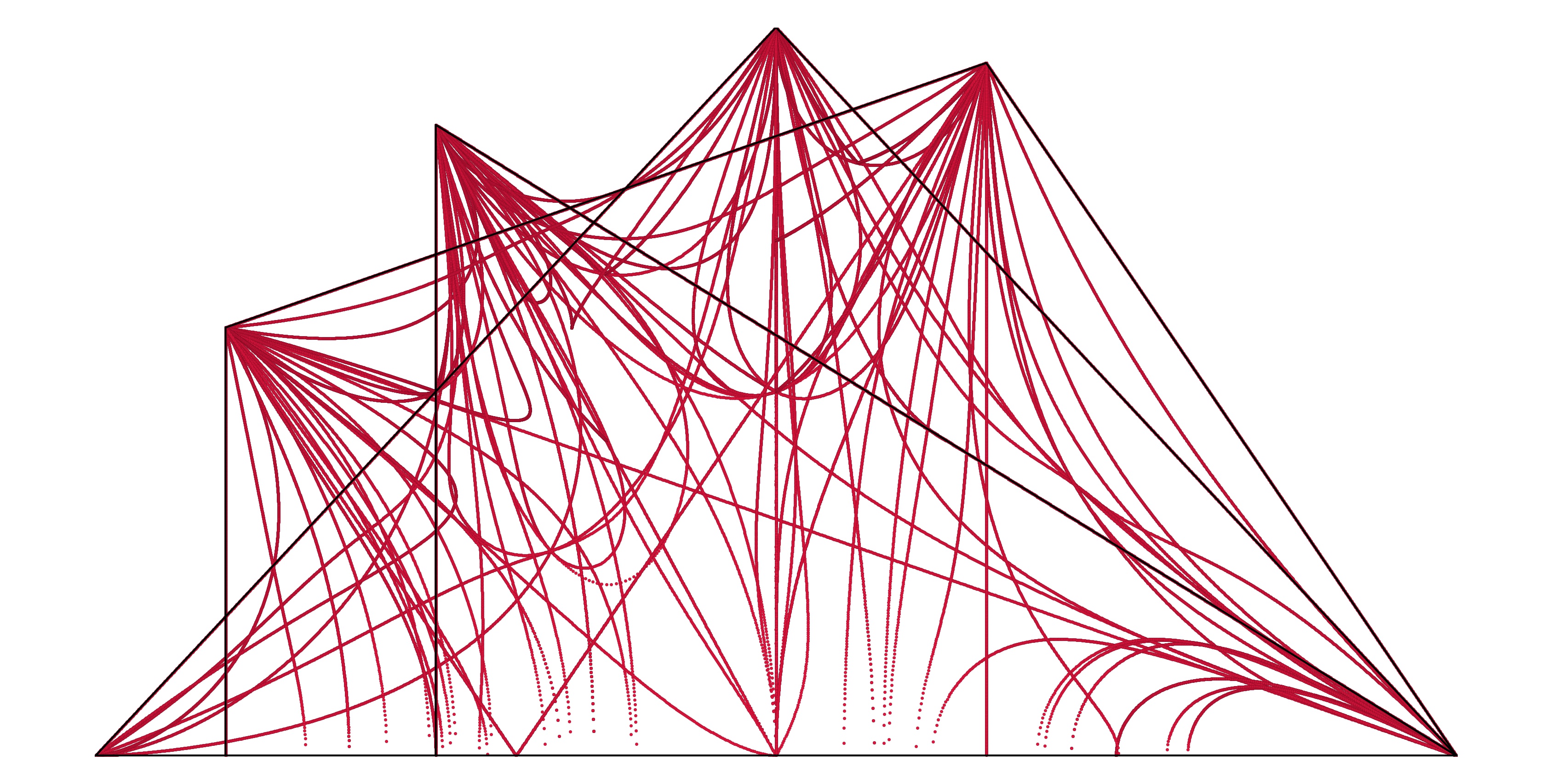}
    \caption{Inequivalent pairs for $DS_5$. The boundaries of $\Pi_k$ for $k \leq 5$ are outlined in black. Eigenvalues of inequivalent pairs are in red. Only the upper half plane is shown due to the symmetry of $DS_n$ across the real line.}
\end{figure}

\begin{figure}
    \centering
    \includegraphics[scale=1]{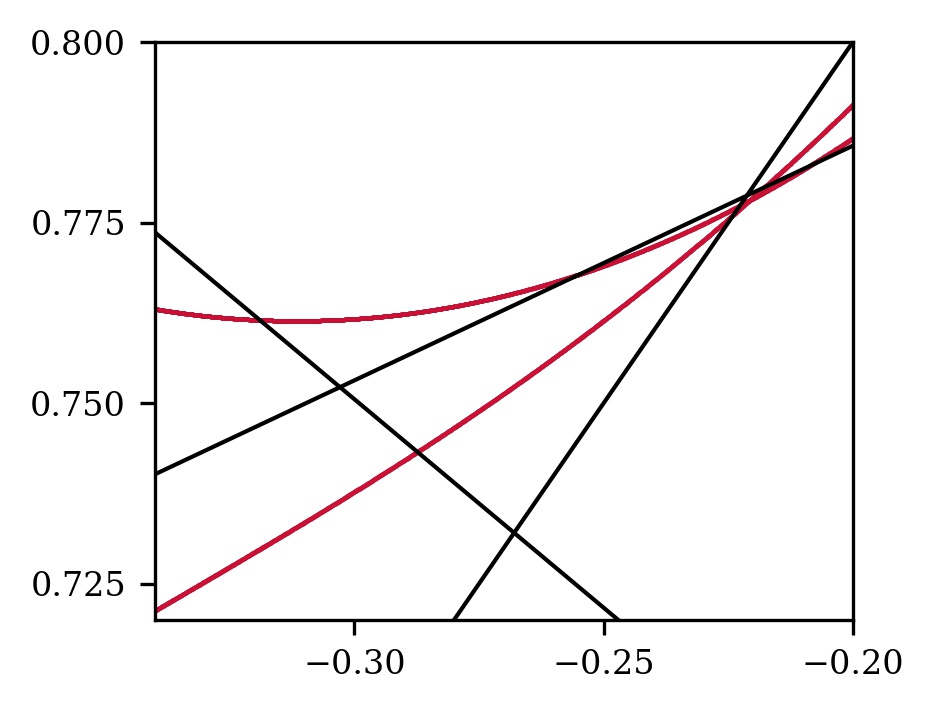}
	\caption{Close-up of exceptional curve in $DS_5$, along with other pair $(1234)$ that comes close to leaving $PM_5$, both in red. $PM_5$ is in black as per usual. The exceptional curve leaves $PM_5$ above the intersection of $\Pi_3$ and $\Pi_5$.}
    \label{fig:zoomexception}
\end{figure}

\begin{figure}
    \centering
    \includegraphics[scale=1]{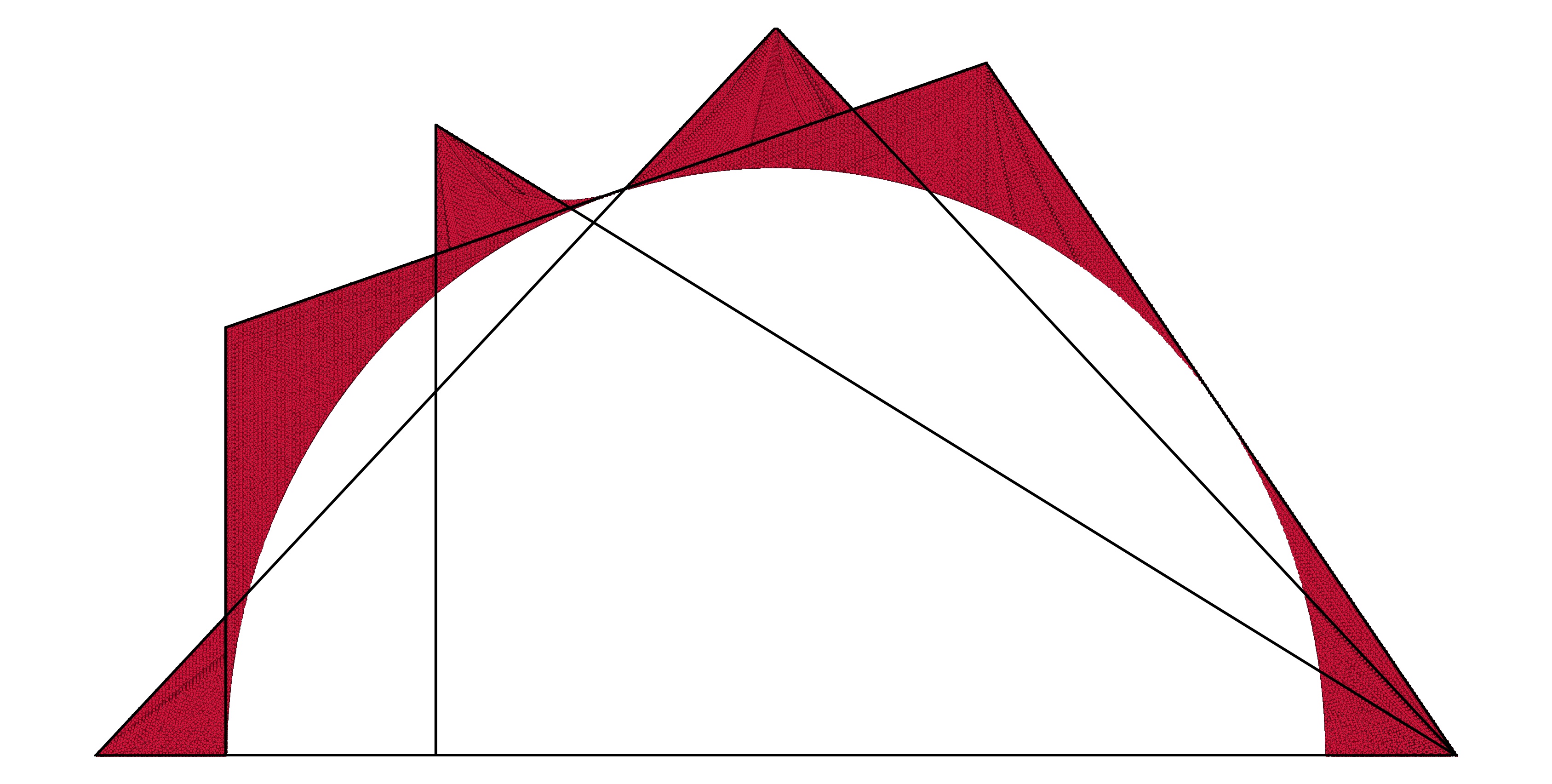}
    \caption{Eigenvalues of triples of permutations for $DS_5$. The inscribed circle is omitted.}
    \label{fig:triples}
\end{figure}

\begin{figure}
    \centering
    \includegraphics[scale=1]{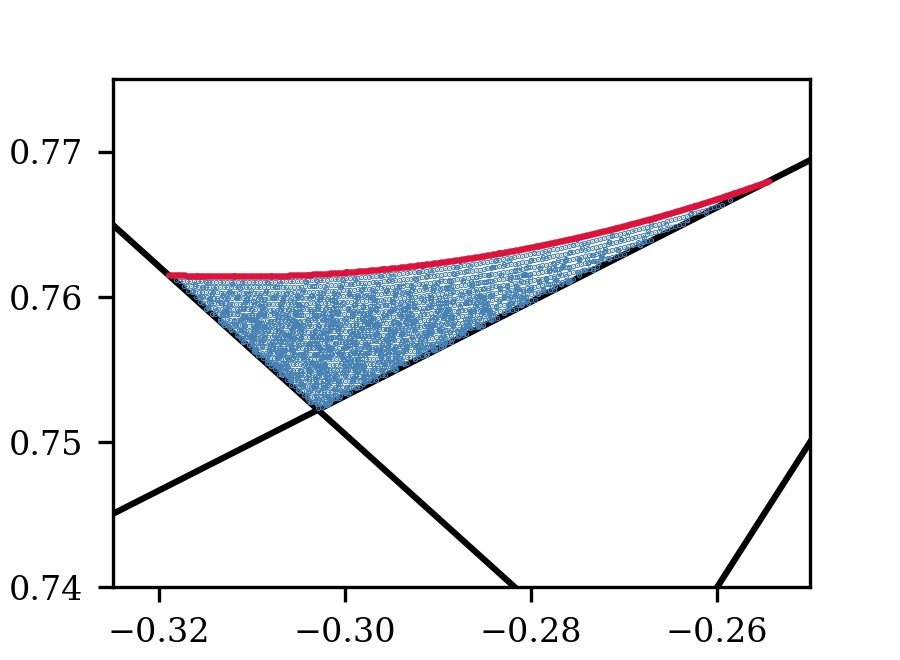}
	\caption{Close-up of subset of $DS_5$ generated by triples. Eigenvalues of triples are in blue while the exceptional curve is in red. Only those points outside of $PM_5$ are plotted. No eigenvalues of triples extend beyond the exceptional curve that is generated by pairs.}
    \label{fig:zoomtriples}
\end{figure}

Furthermore, we have computed convex combinations of triples of permutations in $\Omega_5$ to a mesh size of $200$ and for quadruples at a mesh size of $43$. For a mesh size of $m$ in the case of k-tuples, we mean that for every k-tuple of permutations considered, we take $m$ weights of size $1/m$ and compute eigenvalues for every possible distribution of these weights among the $k$ permutations. Thus, per triple of permutations, we compute eigenvalues for $O(m^2)$ matrices, as opposed to the $O(m)$ matrices computed along each pair.  As shown in Figures \ref{fig:triples} and \ref{fig:zoomtriples}, no eigenvalues of triples leave the region interior to the eigenvalues of pairs. Likewise, no eigenvalues of quadruples leave this region. This supports the Boundary Conjecture.

\section{Computational Results}

Our experiments have found no counterexamples to the Perfect-Mirsky conjecture for any $n \neq 5$. More specifically, there are no counterexamples along convex combinations of pairs of permutations for mesh size choices of $m = 10000$ for $n=6,7,8,9$, $m = 1000$ for $n=10$, and $m=200$ for $n=11$. Recall that the exceptional curve for $n=5$ lies outside of $PM_5$ for convex coefficients in an interval of length greater than $.07$, and counterexamples can be found with coarse mesh sizes---any mesh size larger than $14$ suffices to find one but we also note that any odd mesh size contains $1/2$ as a choice of convex coefficient and thus by chance happens to find a counterexample. Hence, the mesh sizes that we used for these computations for pairs with higher $n$ seem well-beyond sufficiently fine. However, we also note that it is reasonable to expect finer mesh sizes to be necessary for finding counterexamples for larger $n$, as $PM_n$ takes up more space in the unit disc and the absolute distance by which any exceptional curve leaves the region might be smaller.

\begin{figure}
    \centering
    \includegraphics[scale=1]{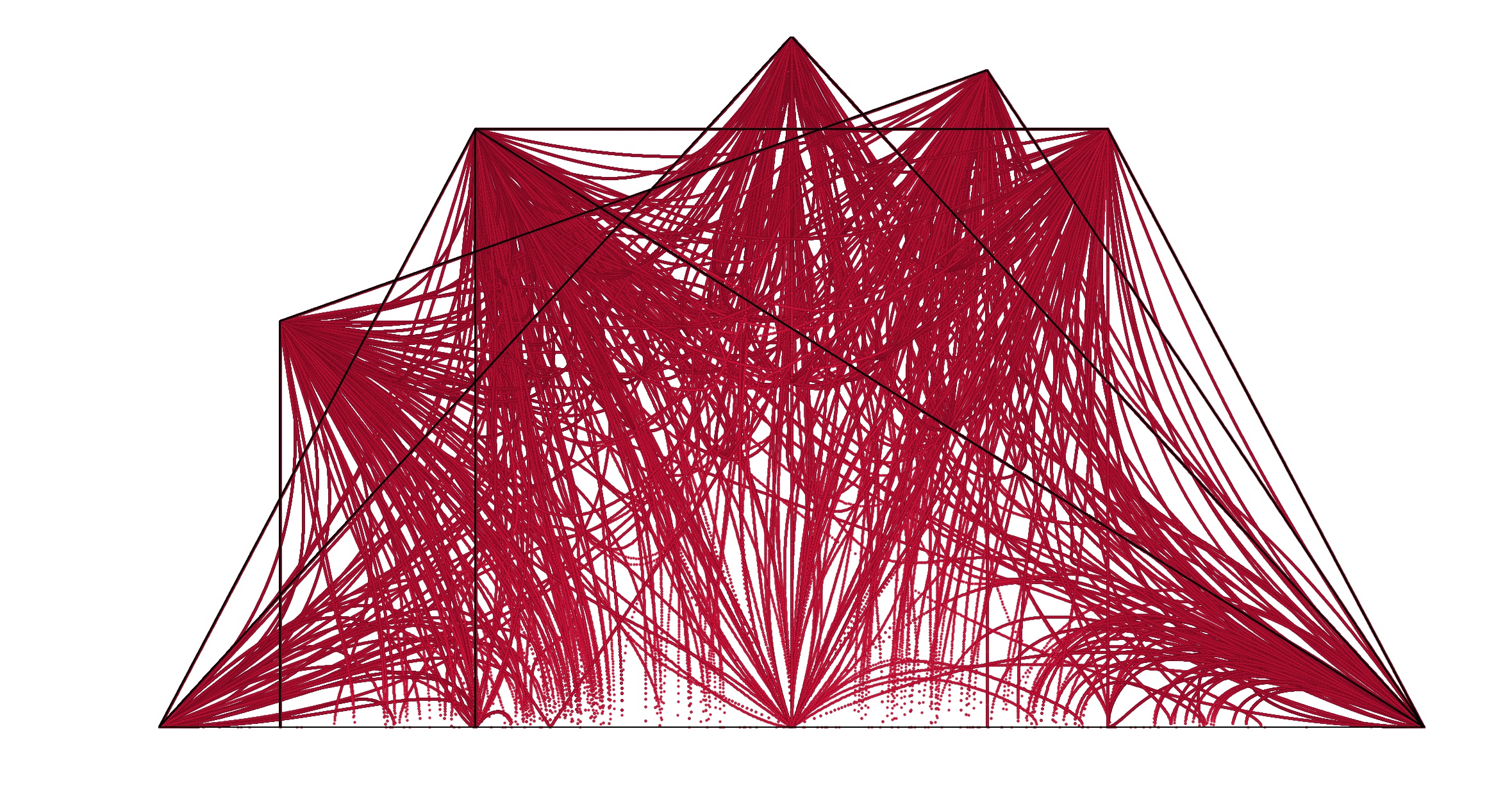}
    \caption{Inequivalent pairs for $DS_6$. $\Pi_k$ for $k \leq 6$ outlined in black.}
    \label{fig:DS_6}
\end{figure}

\begin{figure}
    \centering
    \includegraphics{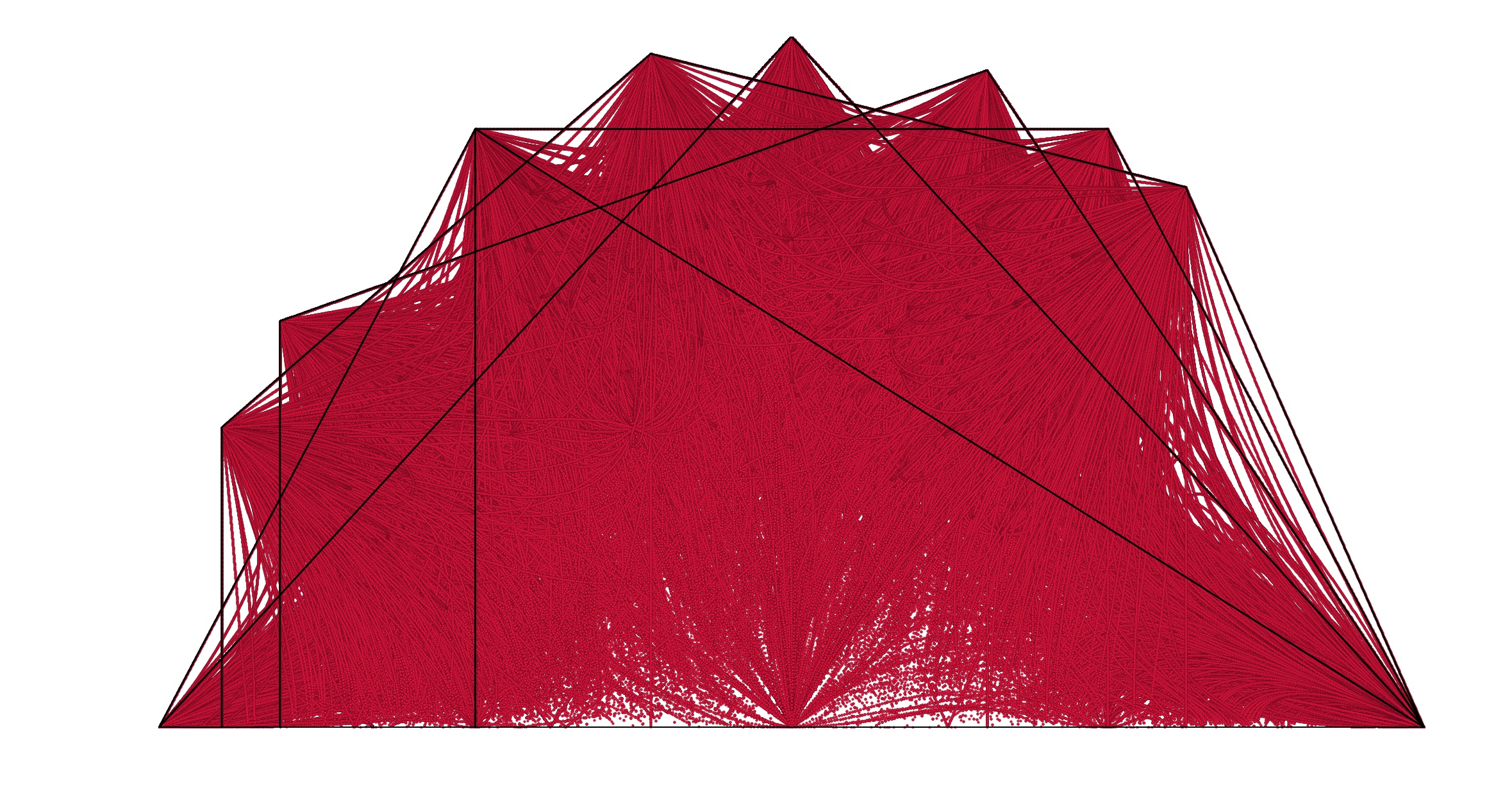}
    \caption{Inequivalent pairs for $DS_7$. $\Pi_k$ for $k \leq 7$ outlined in black.}
    \label{fig:DS_7}
\end{figure}

In Figures \ref{fig:DS_6} and \ref{fig:DS_7} we plot the (eigenvalues of) inequivalent pairs for $DS_6$ and $DS_7$. Every curve is quite far from leaving the boundary of $PM_n$, unlike the $n=5$ case, in which besides the exceptional curve, there was another curve quite close to the boundary of $PM_5$. Likewise, in plotting pairs for $DS_8$ and $DS_9$, there are no curves that are close to leaving $PM_n$. It is difficult to obtain information from plots of $DS_{10}$ since the margin between $PM_{10}$ and the unit disc is lower and also since the high number of points makes plotting at a high mesh size difficult. We computed some triples for $n \leq 11$ at various mesh sizes, and, as expected, due to the distance that pairs were from the the boundary of $PM_n$ for $n \leq 11$, found that there were no counterexamples to Perfect-Mirsky for these $n$.

\section{Spectra of Convex Hulls of Matrix Groups}

In \cite{HS}, the authors consider the so-called \textit{hull spectra} of matrix groups, defined for a matrix group $G$ as 
\[HS(G) := \{\lambda : \lambda \in \sigma(A), A \in Co(G)\}\]
Note that $DS_n = HS(G)$ when $G$ is the group consisting of the $n$-by-$n$ permutation matrices. They determined the hull spectra for abelian matrix groups, all representations of the dihedral group, and all representations of the quaternion group. In all of the cases in which the groups under consideration were finite, the boundary of the hull spectrum was achieved by eigenvalues from convex combinations of pairs of group elements. For infinite abelian groups, there is a possibility that the hull spectrum contains a set that is the interior of the unit disc together with a dense subset of the unit circle. In this case, pairs achieve all values of the boundary of the hull spectrum that are actually part of the hull spectrum i.e. the set $\partial HS(G) \cap HS(G)$. In any other case with infinite abelian groups, pairs determine the boundary.

However, it should be noted that we have found exactly two groups in which pairs do not determine the boundary of the hull spectrum. These two groups are $A_4$ and $A_5$ in the standard representation, and $A_n$ in the standard representation for higher $n$ may also have this property. The exact forms of $A_1, A_2$ and $A_3$ are trivial to determine (they are $\{1\}, \{1\},$ and $\Pi_3$ respectively), and obviously have pairs determining the boundary. However, for $A_4$ and $A_5$, some eigenvalues obtained by convex combinations of triples of even permutations are not within the region interior to the eigenvalues achieved by convex combinations of pairs. See Figures \ref{A4} and \ref{A5} for visualization of these phenomena.

While the fact that the several classes of groups considered in \cite{HS} with known hull spectra have boundary determined by pairs does support the Boundary Conjecture, it is somewhat worrying that the standard representations of $A_4$ and $A_5$, which are of course closely related to the standard representations of $S_4$ and $S_5$, provide exceptions to the analogous conjecture for general matrix groups. However, we do believe that our arguments in Section 3 are still strong, and that along with the computational evidence that we have produced, suggests to us that the Boundary Conjecture is likely to be true.

\begin{figure}
    \centering
    \includegraphics[scale=1]{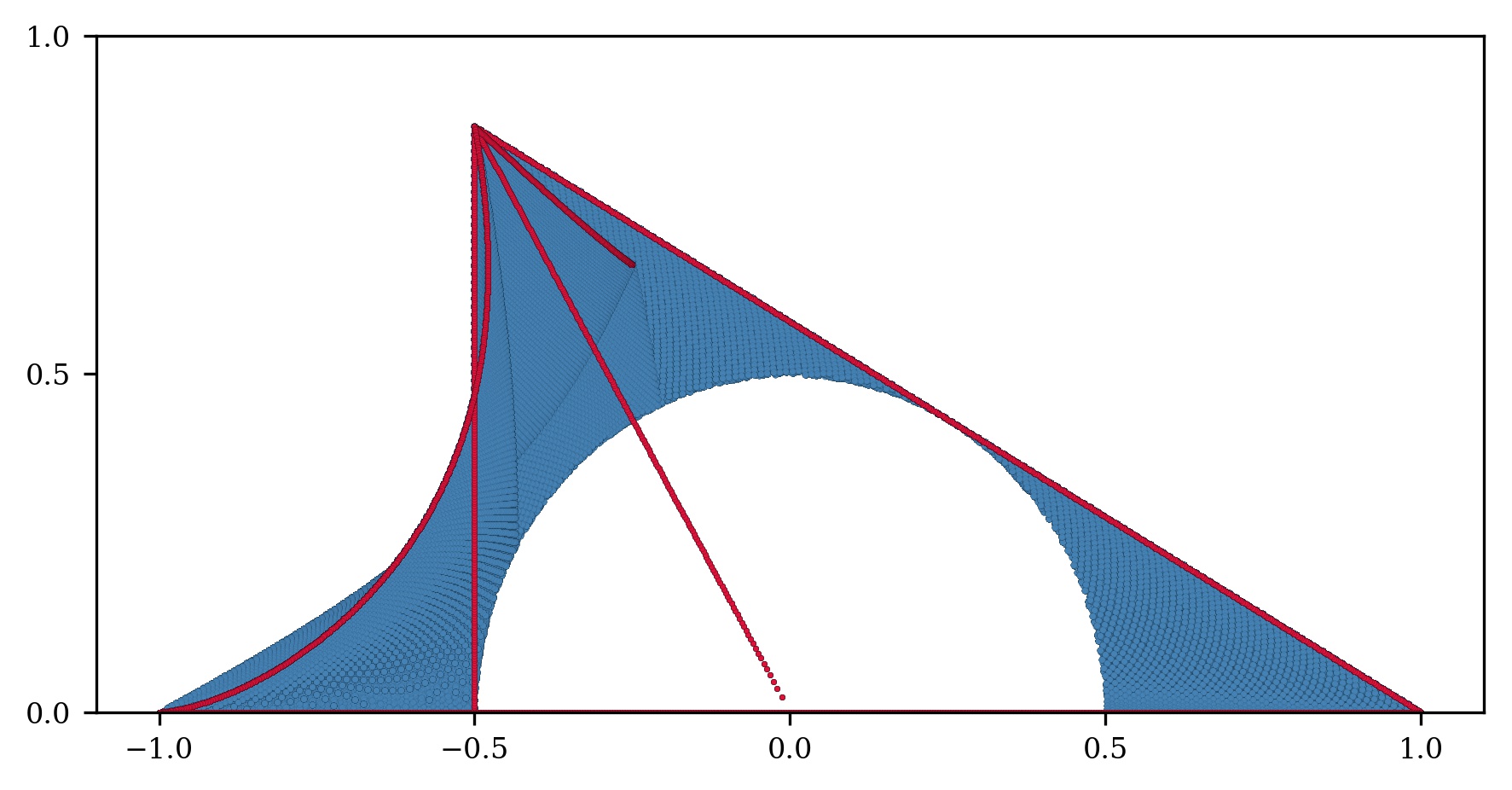}
    \caption{Eigenvalues of convex combinations of pairs and triples of $4$-by-$4$ even permutation matrices. Pairs are in red, and triples are in blue.}
    \label{A4}
\end{figure}

\begin{figure}
    \centering
    \includegraphics{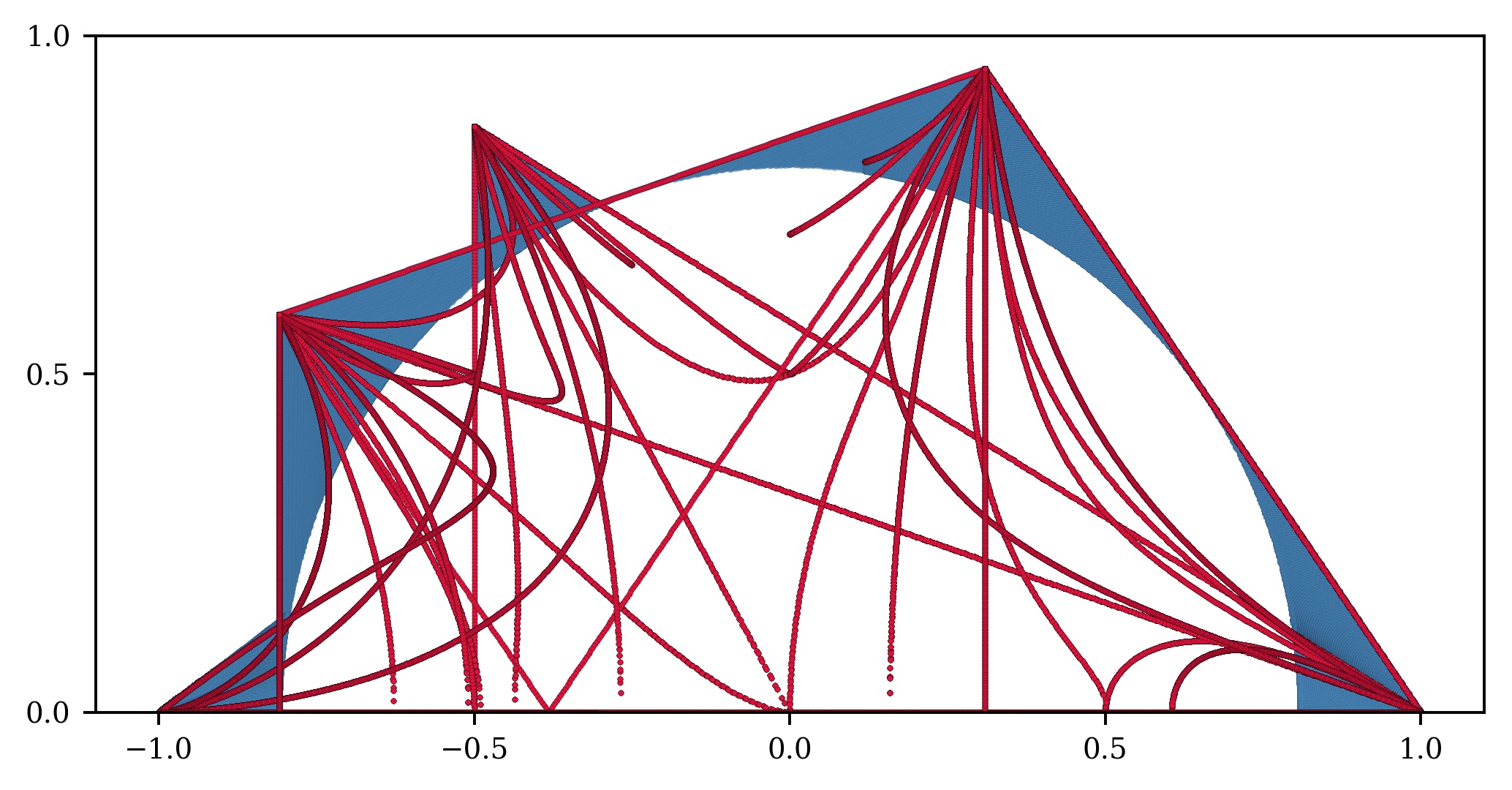}
    \caption{Eigenvalues of convex combinations of pairs and triples of $5$-by-$5$ even permutation matrices. Pairs are in red, and triples are in blue.}
    \label{A5}
\end{figure}

\begin{figure}
    \centering
    \includegraphics{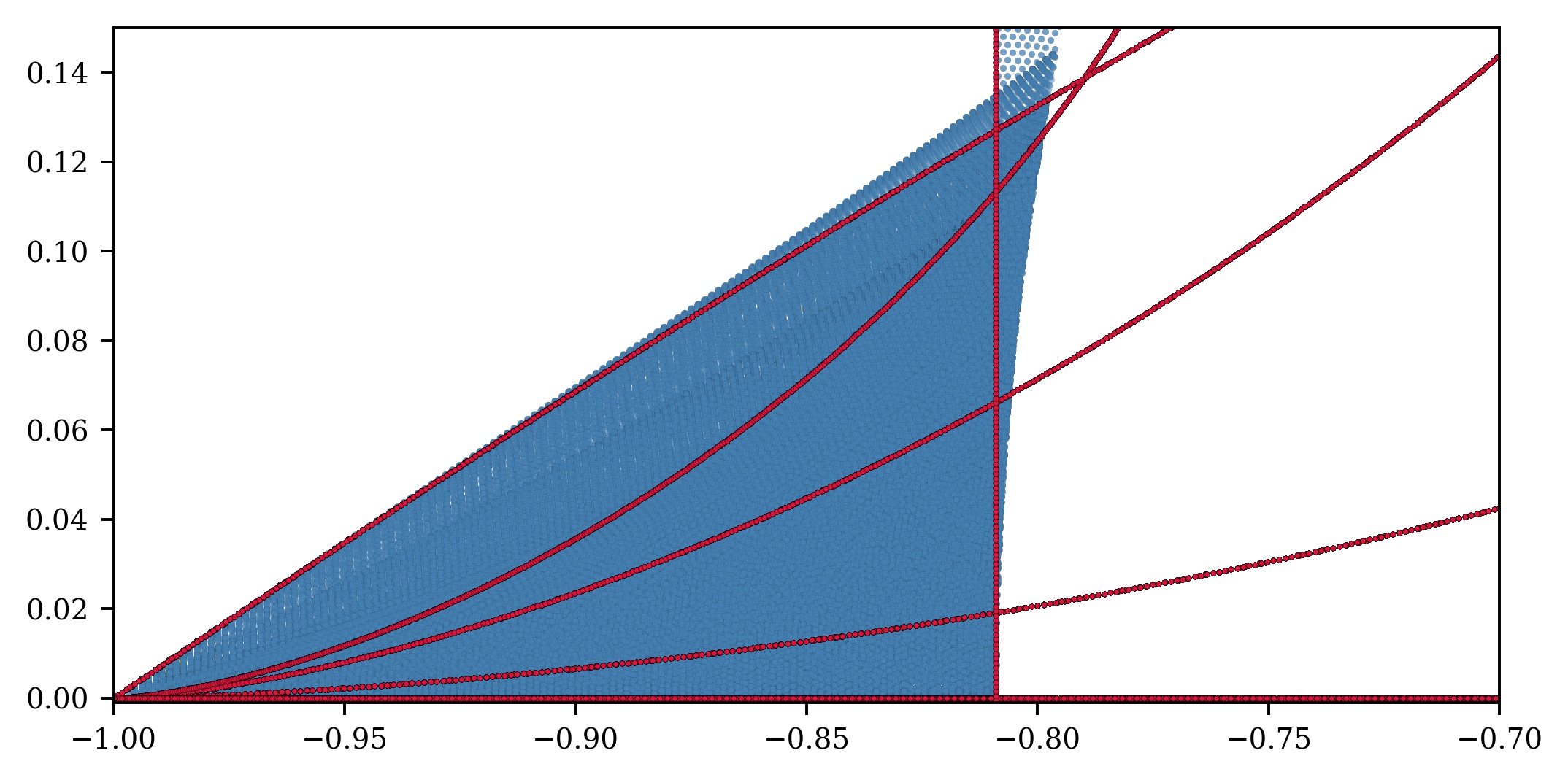}
    \caption{Zoomed in version of Figure \ref{A5}.}
    \label{A5zoom}
\end{figure}

\section{Further Discussion and Conclusion}

In \cite{DS4}, it was conjectured that $DS_n$ is the union $PM_n \cup K_{n-1}$. While this does hold for $n \leq 4$, since $K_{n-1}$ for such $n$ coincides with $DS_{n-1}$, our computations suggest that this is false in general. Our computations for triples in $DS_n$ for $n \geq 6$ show that there are no eigenvalues that leave $PM_{n}$, let alone come close to the arcs that form the boundary of the corresponding Karpelevich regions. For $n=5$, the exceptional curve stays well within the Karpelevich region $K_4$, and we do not find eigenvalues that leave $PM_{5}$ in other regions where $K_4$ extends beyond $PM_5$. See Figures \ref{fig:K4} and \ref{fig:K5} for visualizations of $PM_n \cup K_{n-1}$ for $n=5,6$. In fact, we believe in a stronger inclusion for certain $n$ that we have sufficiently computed. Our computational evidence, as outlined above, is consistent with the Perfect-Mirsky conjecture for certain $n$, so we make the following conjecture:

\begin{Conj}
$DS_n = PM_n$ for $n=6,7,8,9,10$ and $11$.
\end{Conj}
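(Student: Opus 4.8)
The plan is to prove the two inclusions $PM_n \subseteq DS_n$ and $DS_n \subseteq PM_n$ separately. The first is already furnished by Lemma \ref{Polygons}, so the entire task reduces to establishing $DS_n \subseteq PM_n$, and I would extract this from a statement about boundaries alone. Observe first that $PM_n = \bigcup_{k \le n} \Pi_k$ is star-shaped from $0$: each $\Pi_k$ with $k \ge 2$ is convex and contains $0$, hence is star-shaped from $0$, and the segment $[0,1]$ lies in $\Pi_2$, so the union is star-shaped from $0$. By Corollary \ref{StarDSn}, $DS_n$ is likewise star-shaped from $0$, and it is compact (being the eigenvalue image of the compact set $\Omega_n$). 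Consequently, for any $x \in DS_n$ the ray from $0$ through $x$ meets $DS_n$ in a segment $[0,b]$ with $b \in \partial DS_n$ and $x \in [0,b]$. If one can show $\partial DS_n \subseteq PM_n$, then $b \in PM_n$, and star-shapedness of $PM_n$ from $0$ gives $[0,b] \subseteq PM_n$, whence $x \in PM_n$. Thus it suffices to prove the single inclusion $\partial DS_n \subseteq PM_n$.

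Next I would invoke the Boundary Conjecture, which asserts that every point of $\partial DS_n$ is an eigenvalue of a convex combination $(1-\alpha)P + \alpha Q$ of at most two permutation matrices. Granting it, $\partial DS_n$ is covered by the eigenvalue curves $\{\lambda(\alpha) : \alpha \in [0,1]\}$ of such pairs, and by Lemma \ref{PermutationSimilarity} together with the uniform-similarity reductions of Section 3, only the finitely many inequivalent pairs tabulated in Table \ref{tab:numpairs} need be treated. The claim $\partial DS_n \subseteq PM_n$ then becomes the finite assertion that, for each inequivalent pair, the eigenvalue curve of $(1-\alpha)P + \alpha Q$ never leaves $PM_n$ as $\alpha$ ranges over $[0,1]$. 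By Lemma \ref{inradiuslemma}, any pair whose curve stays within the disc of radius $\cos(\pi/n)$ is disposed of immediately, so attention is confined to the thin annular region between that disc and the unit circle.

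To turn this into a rigorous argument rather than the mesh computation described in Section 5, I would work in the standard representation. For each pair, let $A(\alpha) = (1-\alpha)P' + \alpha Q'$ be the corresponding $(n-1)$-by-$(n-1)$ matrix from Lemma \ref{StandardRep}, and write its characteristic polynomial $\chi(\alpha,\lambda) = \det(\lambda I - A(\alpha))$, a polynomial with real coefficients. Substituting $\lambda = x + iy$ splits $\chi$ into real and imaginary parts $R(\alpha,x,y)$ and $I(\alpha,x,y)$, and the eigenvalue curve is exactly $\{R = I = 0,\ \alpha \in [0,1]\}$. Membership of $(x,y)$ in $PM_n$ is governed by a finite union of systems of linear inequalities describing the polygons $\Pi_k$. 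The desired conclusion for a given pair is therefore the emptiness of an explicit semialgebraic set, namely the set of $(\alpha,x,y)$ with $\alpha \in [0,1]$, $R = I = 0$, $(x,y)$ inside the unit disc, and $(x,y)$ outside every $\Pi_k$. Emptiness of such a set is decidable and certifiable, e.g. by eliminating $\alpha$ via a resultant and applying cylindrical algebraic decomposition, or by producing Positivstellensatz certificates; equivalently one may enclose the roots of $\chi(\alpha,\cdot)$ by validated interval arithmetic over a subdivision of $[0,1]$, converting each numerical mesh evaluation into a rigorous enclosure.

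The principal obstacle is that the reduction to pairs rests entirely on the Boundary Conjecture, which is itself unproven; without it one is left with general convex combinations of up to $(n-1)^2+1$ permutations, a problem as hard as the NIEP and beyond current reach. Even granting the conjecture, the certification step is the second bottleneck: the number of inequivalent pairs grows like $n!$ (Table \ref{tab:numpairs} lists over twenty-one million for $n=11$), and each certificate involves eliminating a parameter from polynomials of degree $n-1$, so the semialgebraic emptiness checks become the dominant cost and must be organized carefully---perhaps by exploiting the uniform-similarity structure to share work across pairs---before $n=11$ is within practical reach.
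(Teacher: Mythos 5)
The statement you were given is a \emph{conjecture}: the paper offers no proof of it, only computational evidence, so the fair comparison is between your strategy and that evidence. Your plan is, in essence, the paper's own approach made explicit: assume the Boundary Conjecture, reduce to the finitely many inequivalent pairs of Section 3 (via Lemma \ref{PermutationSimilarity} and the double-coset machinery), and verify that no pair's eigenvalue curve leaves $PM_n$. Two of your ingredients go beyond what the paper writes down, and both are sound. First, the star-shapedness argument that $\partial DS_n \subseteq PM_n$ implies $DS_n \subseteq PM_n$ (using compactness of $DS_n$, Corollary \ref{StarDSn}, and star-shapedness of $PM_n$ from $0$) is correct and fills in a step the paper leaves implicit. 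Second, replacing the paper's mesh sampling (Section 5: mesh sizes $10000$ for $n=6,\ldots,9$, $1000$ for $n=10$, $200$ for $n=11$) with validated interval enclosures or semialgebraic emptiness certificates would be a genuine rigor upgrade: mesh evaluation can in principle miss an excursion outside $PM_n$ between sample points, whereas a certificate excludes it for all $\alpha \in [0,1]$.

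That said, what you have is a conditional program, not a proof, and you correctly identify why: everything rests on the Boundary Conjecture, which is itself unproven --- and not a formality, since Section 6 of the paper exhibits $A_4$ and $A_5$ in the standard representation as groups whose hull-spectrum boundary is \emph{not} determined by pairs, so the analogous statement can fail for closely related matrix groups. Moreover, your certification step is only described, never executed; until it is carried out for every inequivalent pair (over twenty-one million for $n=11$, with polygon data involving algebraic numbers), the inclusion $DS_n \subseteq PM_n$ remains open even conditionally. Note also that the paper hedges against failure of the Boundary Conjecture by computing triples and quadruples, which give evidence independent of that conjecture; your plan has no such fallback. In short: same route as the paper, with a worthwhile strengthening of the finite check, but it establishes the conjecture no more than the paper's computations do.
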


If the Boundary Conjecture is true, there would be little doubt in our conjecture due to the fine mesh sizes to which we have computed pairs. If the Boundary Conjecture is not true for some $n$, then we still believe that this conjecture is likely to be true for the values of $n$ we have discussed, since we have also computed triples for these $n$ and found no counterexamples.

\begin{figure}
    \centering
    \includegraphics[scale=1]{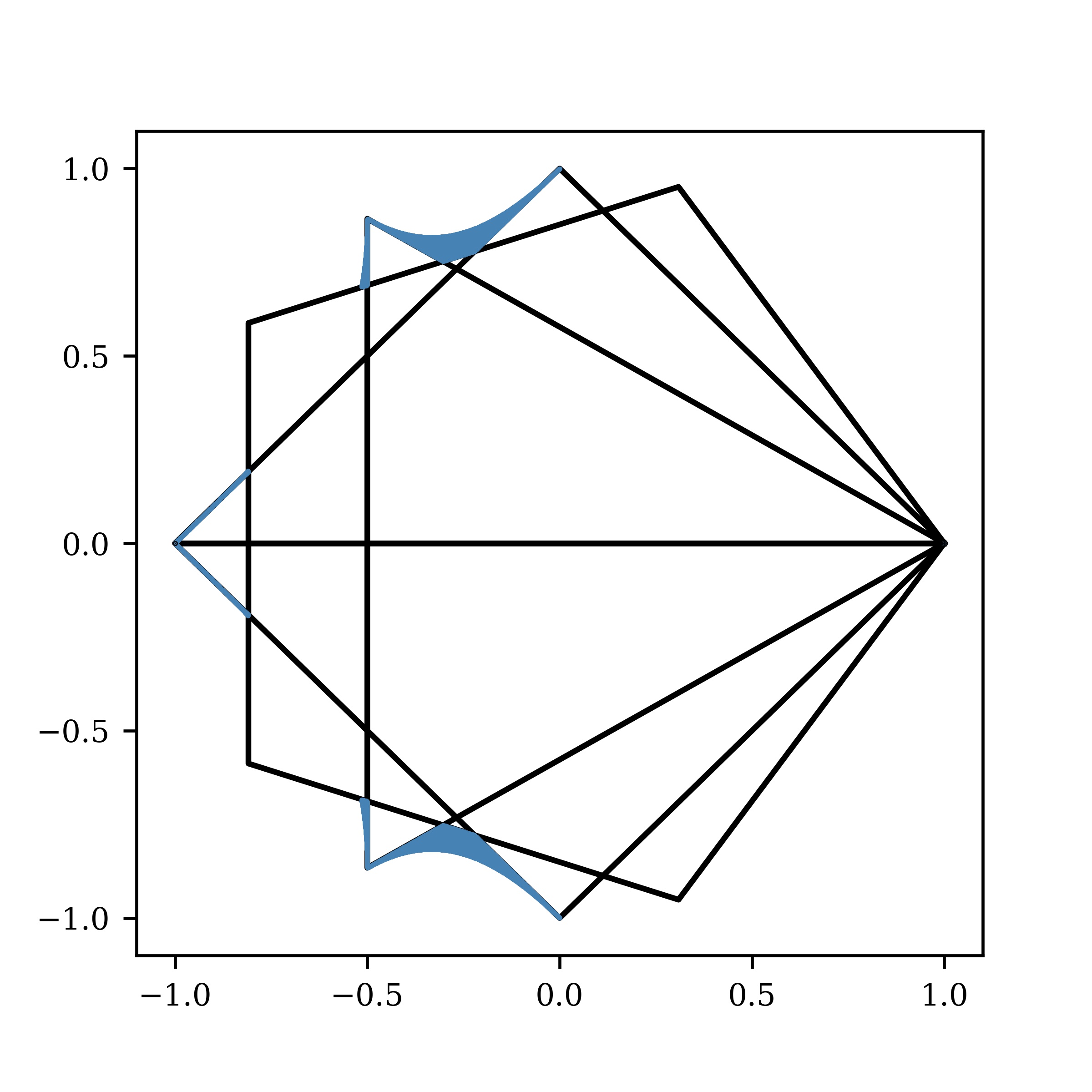}
    \caption{$PM_5$ in outlined in black and $K_4$ filled in blue. Only the subset of $K_4$ that is outside of $PM_5$ is displayed. See \cite{Swift} for equations determining $K_n$ for small $n$.}
    \label{fig:K4}
\end{figure}
\begin{figure}
    \centering
    \includegraphics[scale=1]{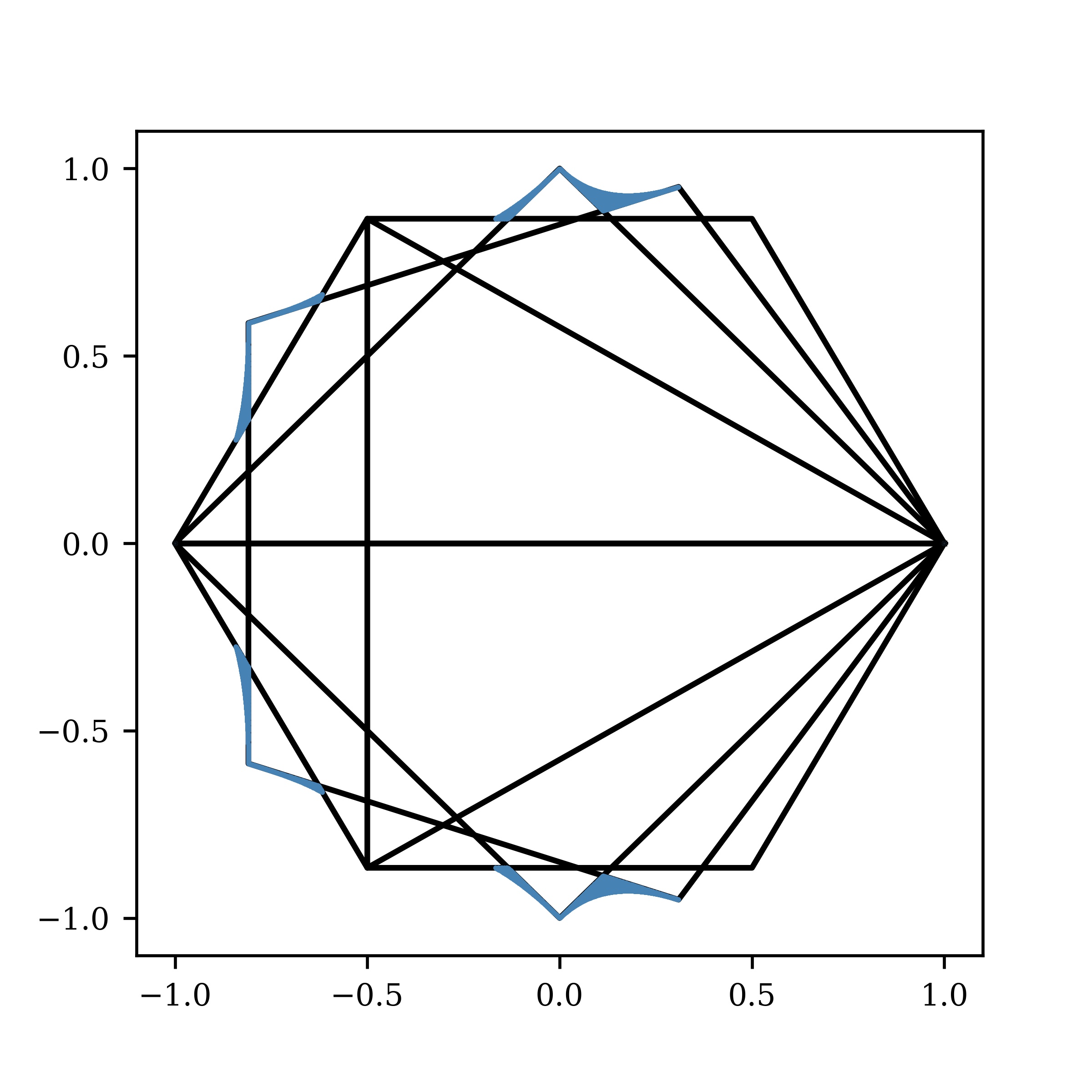}
    \caption{$PM_6$ in black and $K_5$ in blue. Again, only the subset of $K_5$ outside of $PM_6$ is displayed.}
    \label{fig:K5}
\end{figure}

We have spent much CPU time ($> 2$ years) on searching for exceptions to the Perfect-Mirsky conjecture for larger $n$, but have not found any. Our other methods for finding counterexamples did not result in any, but are worth noting. Since the original $n=5$ counterexample occurs as an eigenvalue of the average of two permutations, we checked whether eigenvalues of averages of pairs of permutations gave exceptions to Perfect-Mirsky. Also, we have tried computing eigenvalues of convex combinations of some triples of permutations for $n \leq 11$ and varying mesh sizes, but have not found any counterexamples in this way. The size of this problem of computing $DS_n$ to a given mesh size scales dramatically in $n$, and for higher-order tuples (beyond pairs) of permutations also scales quickly in the mesh size (recall that for triples, computing on a mesh size of $m$ means computing $O(m^2)$ matrices for each triple as opposed to $O(m)$ for each pair in the two matrix case). Thus, our computations are by no means absolutely comprehensive, but the computational results that we have obtained are consistent with the Perfect-Mirsky conjecture for these $n$ under consideration.

In the future, we hope that researchers with the right tools may find results that determine $DS_n$ for some $n > 4$. We do believe that Perfect-Mirsky holds for $n=6,7,8,9,10,11$, and have presented our computational results that are consistent with this assertion. In both of the works \cite{PM} and \cite{DS4} which made major advances in the study of $DS_n$, properties of doubly stochastic matrices were used extensively. For the most part, we work directly with the permutations that are the extreme points of the set of doubly stochastic matrices and are still able to deduce properties of $DS_n$. Algebraic properties of permutations could provide useful information if combined with the properties of doubly stochastic matrices in future study. In $DS_5$, the differences in behavior of eigenpaths of different inequivalent pairs are mysterious and likely worth studying.

\section{Acknowledgments}

We would like to thank the anonymous reviewer for his suggestions on the manuscript. We would also like to thank Jamie Barsotti for assistance with bringing our attention to the relevant theory on $G$-sets, and for assistance in deriving the algorithm to compute the inequivalent pairs. Moreover, we would like to thank John Wilkes, who carried out initial work on this problem in his honors thesis at the College of William and Mary advised by Charles R. Johnson \cite{Wilkes}.

This work was performed [in part] using computing facilities at the College of William and Mary, which were provided by contributions from the National Science Foundation, the Commonwealth of Virginia Equipment Trust Fund and the Office of Naval Research.

This work was supported by the National Science Foundation under Grant DMS \#1757603.

\bibliographystyle{siam}
\bibliography{main}{}

\appendix
\appendixpage
\section{Supplementary Figures}

\begin{figure}[ht]
    \centering
    \includegraphics{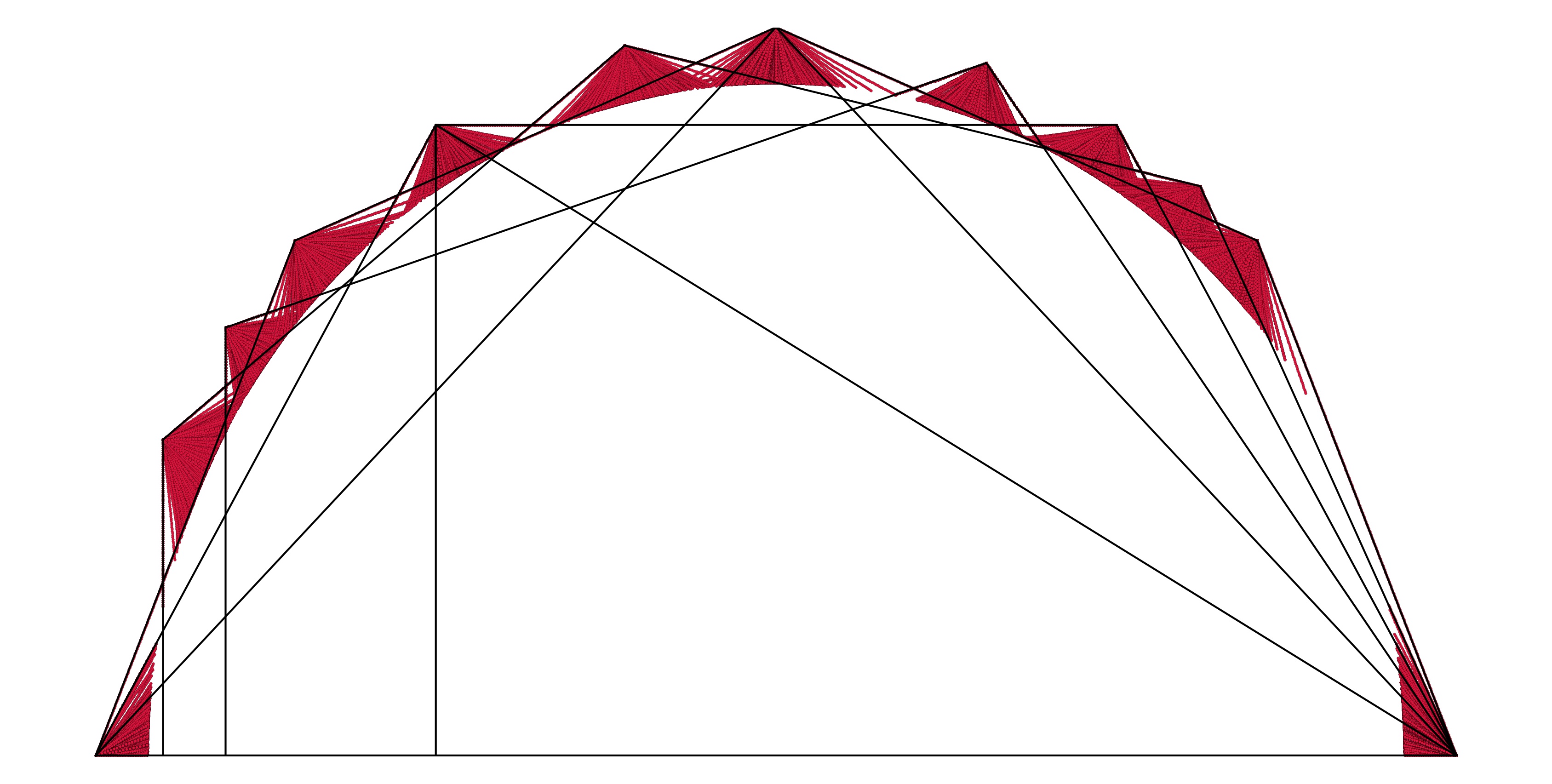}
    \caption{Inequivalent pairs for $DS_8$. Inscribed circle omitted.}
\end{figure}

\begin{figure}[ht]
    \centering
    \includegraphics{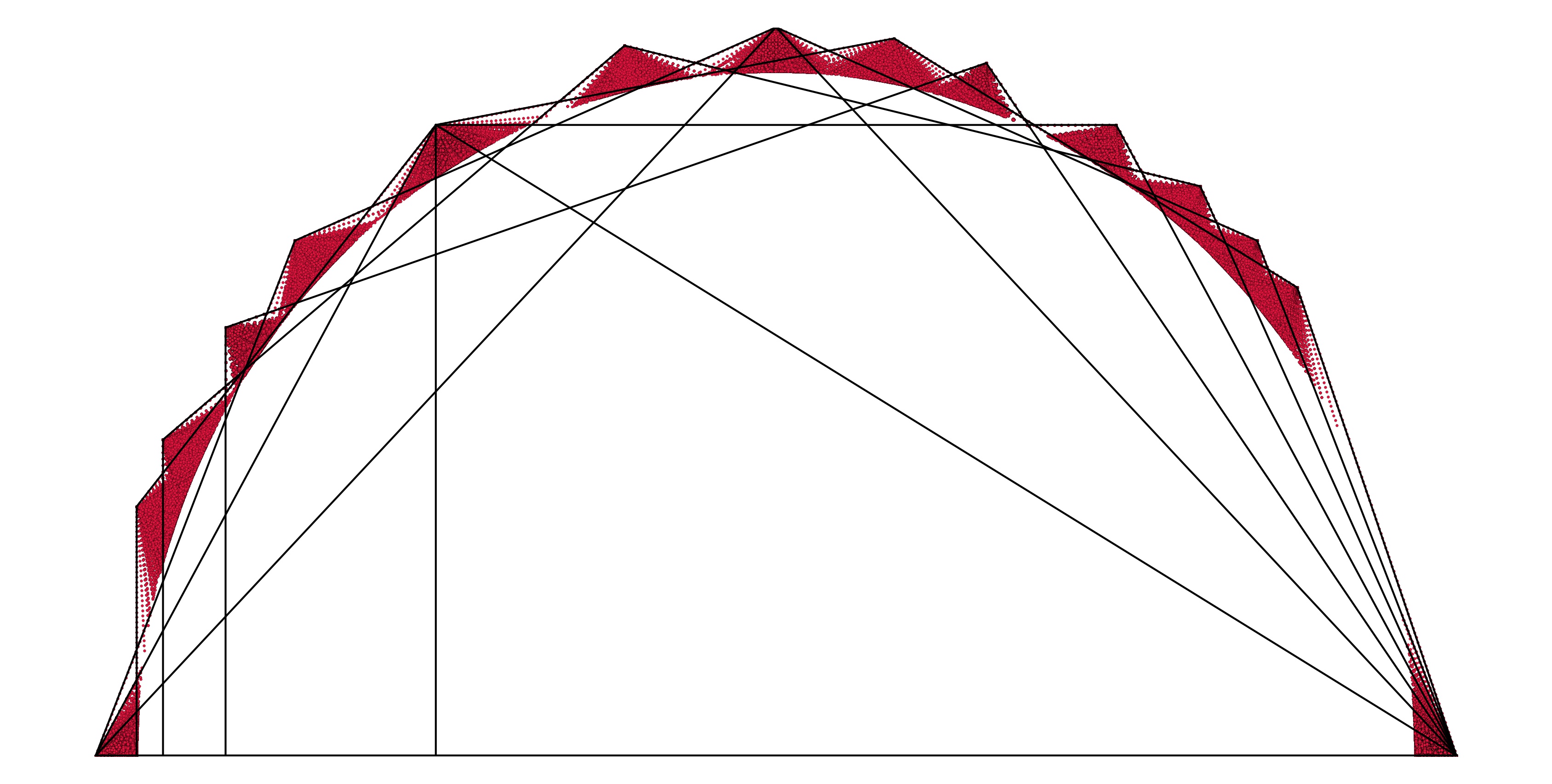}
    \caption{Inequivalent pairs for $DS_9$. Inscribed circle omitted.}
\end{figure}

\end{document}